\newcommand{\A}{{\mathcal A}}
\newcommand{\T}{{\mathcal T}}
\newcommand{\Pt}{{\mathcal P}}
\begin{document}

\begin{frontmatter}



\title{An $H_1$-BMO duality theory in the framework of semigroups of operators}


\author{Tao Mei} \thanks{The author was supported in part by NSF grant DMS 0901009.}
\address{Dept. of Math., Wayne State Univ., Detroit, MI.}

\begin{abstract}
Let $(M,\mu)$ be a sigma-finite measure space. Let $(T_t)$ be a semigroup of positive preserving maps on $(M,\mu)$ with standard assumptions. We prove a $H_1$-BMO duality theory with assumptions only $(T_t)$ itself. The BMO is defined as spaces of functions $f$ such that $\sup_t\|T_t|f-T_tf|^2\|<\infty$. The $H_1$ is defined by square functions of P. A. Meyer's gradient form. Our argument does not rely on the geometric/metric structure of $M$ nor on the kernel of the semigroups of operators. This allows our main results extend to the noncommutative setting as well, e.g. the case where $L_\infty(M,\mu)$ is replaced by von Neumann algebras with a semifinite trace. We also prove a Carleson embedding theorem for semigroups of operators.
\end{abstract}

\begin{keyword}
tent space \sep BMO space \sep Hardy space \sep Carleson measure \sep semigroup of positive operators \sep
von Neumann algebra.


\end{keyword}

\end{frontmatter}







\setcounter{section}{-1}

\section{Introduction}

\setcounter{theorem}{0}\setcounter{equation}{0} E. Stein ([St70]) studied a ``universal" $H^p$ theory for $1<p<\infty$ in the frame work of semigroup of operators. After Stein's work, many other mathematicians (e.g. M. Cowling, P. A. Meyer, N. Varopoulos, Doung/Yan, Auscher/McIntosh and their coauthors) have been working on Fourier multipliers and Hardy/BMO spaces associated with semigroups of operators (see [Cow83], [FS82], [Mey74], [Var80], [DY05], [ADM04], [HM09], [HLMMY] etc.). In particular, Doung/Yan proved an $H_1$-BMO duality for semigroups of operators with heat kernel bounds in their remarkable article [DY05]. S. Hofmann and S. Mayboroda proved an $H_1$-BMO duality for semigroups of operators generated by divergence form elliptic operators. A novelty of Doung/Yan's definitions of BMO and $H_1$ is that they are according to the growth of the kernel of the underlying semigroups of operators.

One aim of those work is to establish a Hardy spaces theory which relies on less geometric/metric properties of Euclidean spaces.
This article continues the effort to this aim and provides an abstract approach to an $H_1$-BMO duality with assumptions only on the semigroups of operators. By ``abstract", we mean that the formulation of the duality is unified and the corresponding constants are absolute, as in Stein's work for Littlewood-Paley theory in the case of  $1<p<\infty$. 


Let $(M,\mu)$ be a sigma-finite measure space. Let $(T_t)$ be a standard semigroup of operators on $L^p(M)$. Under standard assumptions (see Def \ref{standard}), $T_t$ can be viewed as a natural alternative to the classical mean value operator. This led to two natural definition of BMO-norms,
\begin{eqnarray*}
\|f\|_{BMO(\T)}&=&\sup_t\|T_t|f-T_tf|^2\|_\infty^\frac12.\\
\|f\|_{bmo(\T)}&=&\sup_t\|T_t|f|^2-|T_tf|^2\|_\infty^\frac12.
\end{eqnarray*}

These two BMO norms are equivalent to the usual interval BMO norms if $(T_t)$ are the heat semigroups on ${\Bbb R}^n$. But $\|\cdot\|_{BMO(\T)}$ and $\|\cdot\|_{bmo(\T)}$ may not be equivalent to each other in general. Let BMO$(\T)$ and bmo$(\T)$ be Banach spaces characterized by the corresponding BMO norms. In [JM12], Junge and the author of this paper prove that an interpolation result holds between $BMO(\T)$ and $L_1(M)$ and,  the interpolation theory for bmo$(\T)$ and $L_1(M)$ also hold with an additional continuity assumption (see Lemma \ref{JM1}).

Let us consider the following analogues of  Lusin area integral and Littlewood-Paley G function.
\begin{eqnarray*}
S_\Gamma(f)&=&(\int_0^\infty T_s\Gamma(T_sf)ds)^\frac12.\\
G_\Gamma(f)&=&(\int_0^\infty \Gamma(T_sf)ds)^\frac12.
\end{eqnarray*}
Here $\Gamma(\cdot)$ is P. A. Meyer's gradient form (Carr\'e du Champ) for semigroups of operators (see Def \ref{Gamma}). If $(T_t)$ are the classical heat semigroups (or Ornstein-Uhlenbeck) semigroups on ${\Bbb R}^n$ (with the Gaussian measure), then
$$\Gamma(f) = |\partial_x f|^2.$$
This is also the case, if $T_t$ is the Laplace-Beltrami operator on a Riemannian manifold. We use $\int_0^\infty T_s(\cdot)ds$
in the definition of the S-function as an alternative to the integration on the
cones.

Let us define $H_1^S(\T)$ (resp. $H_1^G(\T)$) as the space of all $f\in L_1(M)$ such that $\|f\|_{H_1^S(\T)}=\|S_\Gamma(f)\|_1<\infty$ (resp. $\|f\|_{H_1^G(\T)}=\|G_\Gamma(f)\|_1<\infty$).

Denote by $\tau(\cdot)$ the integration operator $\int\cdot d\mu$ on $M$.  Our first result is the following duality inequality between $H_1$ and BMO.

\begin{theorem}
Let $(T_t)$ be a standard semigroup of operators satisfying the $\Gamma_2\geq0$ condition (\ref{Gamma_2}). Then bmo$(\T)\subset (H_1^S(\T))^*$ and
\begin{eqnarray*}
\tau (fg)\leq c_1\|f\|^{\frac12}_{H_1^S(\T)}\|f\|^{\frac12}_{H_1^G(\T)}\|g\|_{bmo(\T)}\leq c_2\|f\|_{H_1^S(\T)}\|g\|_{bmo(\T)},
\end{eqnarray*}
with absolute constants $c_1,c_2$.\end{theorem}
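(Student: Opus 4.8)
\emph{Proof strategy.} The plan is to reduce the estimate to a reproducing formula for the bilinear pairing and to the Carleson embedding theorem for semigroups proved later in the paper, the hypothesis $\Gamma_2\ge 0$ being needed only at the end to pass from the $H_1^G$--quantity to the $H_1^S$--quantity. First I would derive the reproducing formula
\[
\tau(fg)=2\int_0^\infty \tau\big(\Gamma(T_sf,T_sg)\big)\,ds .
\]
Writing $L$ for the generator of $(T_t)$ and using the defining relation $2\Gamma(u,v)=L(uv)-uLv-vLu$ together with $\tau\circ T_s=\tau$ and $\tau\circ L=0$, one gets $\tau(uLv)+\tau(vLu)=-2\,\tau(\Gamma(u,v))$, hence $\partial_s\,\tau\big((T_sf)(T_sg)\big)=-2\,\tau\big(\Gamma(T_sf,T_sg)\big)$; integrating over $s\in(0,\infty)$ and using that $\lim_{s\to\infty}\tau\big((T_sf)(T_sg)\big)=0$ (the contribution of the fixed part of $(T_t)$ vanishes under the standing assumptions, the general case only adding a harmless lower order term) gives the formula. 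I would establish it first for $f,g$ in a dense, regular subclass — e.g.\ $f\in T_\varepsilon(L_1\cap L_\infty)$ and $g\in L_\infty\cap L_2$ — where every manipulation is legitimate, and pass to the limit only at the end; this is also where the duality statement $\mathrm{bmo}(\T)\subset (H_1^S(\T))^*$ is made precise.

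Next I would apply the (operator) Cauchy--Schwarz inequality for the carr\'e du champ, $|\Gamma(u,v)|\le\Gamma(u)^{1/2}\Gamma(v)^{1/2}$, to obtain
\[
\tau(fg)\ \le\ 2\int_0^\infty \tau\Big(\Gamma(T_sf)^{1/2}\,\Gamma(T_sg)^{1/2}\Big)\,ds ,
\]
which in the commutative case is $2\int_M\int_0^\infty \Gamma(T_sf)^{1/2}\Gamma(T_sg)^{1/2}\,ds\,d\mu$. The role of the $\mathrm{bmo}(\T)$ norm is that the family $\Phi_s:=\Gamma(T_sg)$ defines a Carleson measure: differentiating $s\mapsto T_s\big(|T_{t-s}g|^2\big)$ exactly as above gives
\[
T_t|g|^2-|T_tg|^2=2\int_0^t T_s\,\Gamma(T_{t-s}g)\,ds ,
\]
and, after the change of variable $s\mapsto t-s$, this reads $\sup_t\big\|\int_0^t T_{t-s}\,\Phi_s\,ds\big\|_\infty\le\frac12\|g\|_{\mathrm{bmo}(\T)}^2$, which is precisely the semigroup Carleson condition, with constant $\frac12\|g\|_{\mathrm{bmo}(\T)}^2$.

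Now I would invoke the Carleson embedding theorem of the paper, applied to the test family $\psi_s:=\Gamma(T_sf)^{1/2}$ against the Carleson family $\Phi_s$: its output is a bound of $\int_0^\infty\tau(\psi_s\Phi_s^{1/2})\,ds$ by the square root of the Carleson constant times the geometric mean, measured in $L_1$, of the ``conical'' square function $\big(\int_0^\infty T_s[\psi_s^2]\,ds\big)^{1/2}=S_\Gamma(f)$ and the ``vertical'' square function $\big(\int_0^\infty\psi_s^2\,ds\big)^{1/2}=G_\Gamma(f)$; that is,
\[
\tau(fg)\ \le\ c\,\|g\|_{\mathrm{bmo}(\T)}\,\|S_\Gamma f\|_1^{1/2}\,\|G_\Gamma f\|_1^{1/2},
\]
which is the middle term of the statement. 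For the last term I would use $\Gamma_2\ge0$, which yields the gradient estimate $\Gamma(T_sh)\le T_s\Gamma(h)$, hence $\Gamma(T_sf)=\Gamma(T_{s/2}T_{s/2}f)\le T_{s/2}\Gamma(T_{s/2}f)$ and, integrating, $G_\Gamma(f)^2\le 2\,S_\Gamma(f)^2$ pointwise, so $\|G_\Gamma f\|_1\le\sqrt 2\,\|S_\Gamma f\|_1$; substituting gives $\tau(fg)\le c\,2^{1/4}\,\|g\|_{\mathrm{bmo}(\T)}\|S_\Gamma f\|_1$, and the concluding density argument removes the regularity restrictions on $f,g$.

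The hard part will be the Carleson embedding step invoked above. Classically it rests on a stopping-time / tent decomposition over Euclidean cubes, and nothing of that sort — no metric, no cubes, no cones — is available here; one has to synthesise the analogue of ``the tent over a level set'' purely from the operators $T_t$ and control it by a maximal inequality for the semigroup (Hopf--Dunford--Schwartz, or, in the von Neumann algebra setting, the noncommutative maximal ergodic theorem of Junge--Xu). This is also where the real technical cost lies in the operator-valued case: the Cauchy--Schwarz for $\Gamma$, the square functions $S_\Gamma,G_\Gamma$ and the maximal functions must all be read in the operator sense, and one must check that the whole chain survives that translation.
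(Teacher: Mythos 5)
Your outer architecture matches the paper's: a reproducing formula $\tau(fg)=c\int_0^\infty\tau\,\Gamma(T_sf,T_sg)\,ds$, the observation via Meyer's identity $T_t|g|^2-|T_tg|^2=2\int_0^tT_{t-s}\Gamma(T_sg)\,ds$ that $\mathrm{bmo}(\T)$ is exactly a semigroup Carleson condition on $\Gamma(T_sg)$, and a final appeal to $\Gamma_2\ge0$ to convert $\|G_\Gamma f\|_1$ into $\|S_\Gamma f\|_1$. But the step you flag as ``the hard part'' is the entire theorem, and the tool you invoke for it does not exist in the form you need. The Carleson embedding theorem in the paper's appendix is an $L_p$ statement for $1<p<\infty$ about the family $P_tf$ itself, proved by interpolating against $H_1$--BMO duality; it is neither an $L_1$-endpoint result nor applicable to a general test family $\psi_s=\Gamma(T_sf)^{1/2}$, and its output is not the geometric mean $\|S_\Gamma f\|_1^{1/2}\|G_\Gamma f\|_1^{1/2}$ you assert. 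Your fallback suggestion (synthesise tents over level sets and control them by a Hopf--Dunford--Schwartz or Junge--Xu maximal inequality) is precisely the stopping-time route the paper is designed to avoid, since no metric or level-set geometry is assumed; it is not clear it can be carried out, and in any case it is not what happens here.

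What actually closes the gap in the paper is a weighted Cauchy--Schwarz argument with no maximal function at all. One inserts the truncated vertical square function $G_s=(\int_s^\infty\Gamma(T_{2y}f)\,dy)^{1/2}$ as a weight,
\begin{eqnarray*}
\Big|\tau\!\int_0^\infty\Gamma(T_{2s}f,T_{(2+v)s}\varphi_s)\,ds\Big|\le\Big(\tau\!\int_0^\infty\Gamma(T_{2s}f)\,G_s^{-1}ds\Big)^{\frac12}\Big(\tau\!\int_0^\infty\Gamma(T_{(2+v)s}\varphi_s)\,G_s\,ds\Big)^{\frac12},
\end{eqnarray*}
so that the first factor telescopes exactly to $2\|G_\Gamma f\|_1$ (since $-\partial_sG_s^2=\Gamma(T_{2s}f)$), while in the second factor one replaces $G_s$ by the truncated conical function $S_s$ and integrates by parts in $s$ against $-\partial_y T_{y/2}(S_y)$, whose total mass is $\|S_\Gamma f\|_1$; the Carleson/bmo condition is then paired against this positive measure. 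Making this work requires the monotonicity properties of $s\mapsto T_{as}(S_s)$ (the paper's Lemma 2.1, itself a consequence of $\Gamma_2\ge0$ and Kadison--Schwarz), which is the genuinely new device replacing tents and stopping times. Your direct pointwise Cauchy--Schwarz $|\Gamma(u,v)|\le\Gamma(u)^{1/2}\Gamma(v)^{1/2}$ without the $G_s^{\pm1}$ weight destroys the structure needed for either factor to be summable, so as written the argument cannot be completed. Separately, note that $\Gamma_2\ge0$ is already needed for the weighted Cauchy--Schwarz stage (to compare $G_s$ with $S_s$ and to prove the monotonicity lemma), not only at the very end as you suggest.
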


Our attention then turns to the following questions.
\begin{itemize}
\item
When does the other direction $bmo(\T) \supseteq (H_1^S(\T))^*$ hold?

\item
When does $bmo(\T)=BMO(\T)$?

\item When does $H_1^S(\T)=H_1^G(\T)$?
\end{itemize}

The answers to them  are all ``yes" if  the semigroup $(T_t)$  satisfies two more conditions.
\begin{theorem}\label{end}
Let $T_t$ be a standard semigroup of operators satisfying the $\Gamma_2\geq0$ condition (\ref{Gamma_2}). Assume, in addition, that there exist constants $c_3,c_4, r>0$ such that

(i)  $\|(T_{t+\varepsilon t}-T_{t})f\|_1\leq c_3 \varepsilon^r\|f\|_1,$ for all $\varepsilon>0, t>0$ and $f\in L_1(M)$.

(ii) Denote $M_t=\frac1t\int_{0}^{t}T_sds$.
\begin{eqnarray}\label{Lhalf1}
\|(M_{8t}|T_tf|^2)^\frac12\|_{L_1(M)}\leq c_4\|f\|_{L_1(M)},
\end{eqnarray}
for all $t>0$ and $f\in L_1^+(M)$.

 Then every linear functional $\ell$ on $H_1^S(\T)$ can be represented as $\tau (g\cdot)$ with $\|g\|_{bmo(\T)}\simeq c\|\ell\|_{(H_1^S(\T))^*}$. Moreover,

$$bmo(\T)=BMO(\T)\ \  {\rm and} \ \ H_1^G(\T)=H_1^S(\T).$$
The equivalent constants only depend on $c_3,c_4, r$.
\end{theorem}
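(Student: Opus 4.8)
The plan is to obtain the outstanding inclusion $(H_1^S(\T))^*\subseteq bmo(\T)$ from a tent--space duality tailored to $(T_t)$, following the classical tent--space method of Coifman--Meyer--Stein but with the Euclidean covering/Whitney arguments replaced by hypotheses (i) and (ii). Introduce three spaces of $\Gamma$--valued functions $a=(a_s)_{s>0}$ on $(0,\infty)$: the column space $\A_1$ with $\|a\|_{\A_1}=\|(\int_0^\infty T_s\Gamma(a_s)\,ds)^{1/2}\|_{L_1(M)}$, the Hilbert space $\A_2$ with $\|a\|_{\A_2}^2=\int_0^\infty\tau(\Gamma(a_s))\,ds$, and the Carleson space $\A_\infty$ dual to $\A_1$, whose norm is comparable to the $bmo(\T)$--norm of the primitive $s\mapsto\int_0^s a_u\,du$. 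Modulo the fixed--point space of $(T_t)$ (controlled by the standard assumptions), the lift $\Phi\colon f\mapsto(T_sf)_{s>0}$ embeds $H_1^S(\T)$ isometrically into $\A_1$ --- this is just the definition of $\|\cdot\|_{H_1^S(\T)}$ --- and $L_2(M)$ into $\A_2$ up to an absolute constant, while its formal adjoint is the synthesis operator $\Phi^*\colon(b_s)\mapsto g$ characterized by the polarized reproducing identity $\tau(fg^*)=c\int_0^\infty\tau(\Gamma(T_sf,b_s))\,ds$, so that $g$ is, up to a constant, $(-A)\int_0^\infty T_sb_s\,ds$ with $A$ the generator. That $\Phi^*$ maps $\A_\infty$ boundedly into $bmo(\T)$ is precisely the estimate already established for Theorem~0.1 from the $\Gamma_2\geq0$ condition. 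Granting the tent duality $(\A_1)^*=\A_\infty$, a functional $\ell\in(H_1^S(\T))^*$ extends along $\Phi$, by Hahn--Banach, to some $\widetilde\ell\in(\A_1)^*=\A_\infty$; then $g:=\Phi^*\widetilde\ell\in bmo(\T)$ represents it, $\ell(f)=\tau(fg)$, with $\|g\|_{bmo(\T)}\lesssim\|\ell\|$, and together with Theorem~0.1 this yields $bmo(\T)\cong(H_1^S(\T))^*$.

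The crux is the tent duality $(\A_1)^*=\A_\infty$. The inclusion $\A_\infty\hookrightarrow(\A_1)^*$ is a Carleson embedding theorem for $(T_t)$ --- the self--contained result announced in the abstract --- and follows from a direct estimate transcribing the classical Carleson measure inequality; it needs neither (i) nor (ii). For the reverse $(\A_1)^*\subseteq\A_\infty$ there is no kernel and no metric, hence no Whitney decomposition or atomic decomposition of $\A_1$ to fall back on; instead I would use the self--duality of the Hilbert space $\A_2$ to represent a given $\ell\in(\A_1)^*$ by an element $b\in\A_2$ on a dense subspace, and then bootstrap the bound $\|b\|_{\A_\infty}\lesssim\|\ell\|$ from the $L_1$ estimate (ii) for the truncated square functions $(M_{8t}|T_tf|^2)^{1/2}$, uniform in $t$, together with a weak--$*$ compactness argument for the normal functionals involved. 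Hypothesis (i), the $L_1$ H\"older continuity of $t\mapsto T_tf$ at geometric scales, is what lets one pass between these continuous operators and their dyadic discretizations and, via Lemma~\ref{JM1}, keeps $bmo(\T)$ inside the interpolation scale with $L_1(M)$; this controls closures and transfers estimates down from the $L_2$ level.

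It remains to identify the two pairs of spaces. The inclusion $H_1^S(\T)\subseteq H_1^G(\T)$ is already in Theorem~0.1 (its second inequality forces $\|f\|_{H_1^G}\lesssim\|f\|_{H_1^S}$), so only $\|S_\Gamma f\|_1\lesssim\|G_\Gamma f\|_1$ is left; I would get it by dominating $T_s\Gamma(T_sf)$ pointwise by a dilated average $M_{cs}$ of the $G$--density and absorbing the surplus averaging through hypothesis (ii), giving $H_1^G(\T)=H_1^S(\T)=:H_1(\T)$. Then $bmo(\T)=BMO(\T)$ drops out of the duality: an entirely parallel tent construction for the $G$--function, together with the $BMO$--sided companion of Theorem~0.1 (the factor $\|f\|_{H_1^G}^{1/2}$ there being its vestige), realizes $BMO(\T)$ as $(H_1^G(\T))^*$, and since $H_1^G(\T)=H_1(\T)=H_1^S(\T)$ one gets $BMO(\T)=(H_1(\T))^*=bmo(\T)$. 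All non--absolute inputs in this chain are $c_3,c_4,r$ (via (i), (ii) and Lemma~\ref{JM1}), so the equivalence constants depend only on them.

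The step I expect to be the main obstacle is the hard direction of the tent duality, $(\A_1)^*\subseteq\A_\infty$, carried out without any kernel or geometric structure: one must find the right abstract surrogate for the Whitney/good--$\lambda$ machinery, and hypothesis (ii) is essentially the only available input, so the delicate point is to extract from the uniform $L_1$ bound on $(M_{8t}|T_tf|^2)$ a genuine Carleson condition on $b$, while at the same time ensuring that the reproducing formula defining $g=\Phi^*\widetilde\ell$ converges in the weak--$*$ topology of $bmo(\T)$ rather than merely formally --- a point where $\sigma$--finiteness of $(M,\mu)$ and normality of the functionals have to be used with care.
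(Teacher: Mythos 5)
Your plan hinges on the tent--space duality $(\A_1)^*=\A_\infty$, and you explicitly defer its hard direction $(\A_1)^*\subseteq\A_\infty$ to ``an abstract surrogate for the Whitney/good--$\lambda$ machinery'' that you do not supply. That is not a technical loose end but the entire content of the theorem: everything else in your outline --- the Carleson embedding $\A_\infty\hookrightarrow(\A_1)^*$, the synthesis map into $bmo(\T)$, the Hahn--Banach extension --- is either Theorem~0.1 again or soft functional analysis. Representing $\ell$ by some $b\in\A_2$ on a dense subspace is the easy first step of Coifman--Meyer--Stein; passing from that to $\|b\|_{\A_\infty}\lesssim\|\ell\|$ classically requires testing $\ell$ against elements of $\A_1$ localized in tents, i.e.\ an atomic decomposition, and hypothesis (ii) --- an $L_1\to L_1$ bound for the maximal square function $(M_{8t}|T_t\cdot|^2)^{1/2}$ --- does not by itself produce such test elements. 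Your proposed uses of (i) (dyadic discretization, interpolation via Lemma \ref{JM1}) and of (ii) (a pointwise domination of $T_s\Gamma(T_sf)$ by dilated averages of $\Gamma(T_uf)$, to get $\|S_\Gamma f\|_1\lesssim\|G_\Gamma f\|_1$) are likewise unsubstantiated; the $\Gamma_2\geq0$ condition yields dominations only in the opposite direction, which is exactly why $\|f\|_{H_1^G}\leq 2\|f\|_{H_1^S}$ is the easy inequality.

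The paper sidesteps tent duality entirely. By the John--Nirenberg inequality (\ref{JNSS}), $\|\varphi\|_{BMO(\T)}=\sup_t\|T_t|\varphi-T_t\varphi|\|_\infty=\sup_f|\tau(\varphi f)|$, where $f$ ranges over the explicit molecules $f=hT_t(g)-T_t(hT_tg)$ with $g\geq0$, $\|g\|_1\leq1$, $\|h\|_\infty\leq1$. The whole proof then consists in showing that these molecules lie in a fixed ball of $H_1^S(\T)$: for $\int_0^t T_s\Gamma(T_sf)\,ds$ one dominates $T_s$ by $3M_{3t}T_{t-s}$, collapses the integral via Meyer's identity (Lemma \ref{lemma}) to $3M_{3t}(T_t|f|^2-|T_tf|^2)$, and invokes (ii); for $\int_t^\infty$ one decomposes dyadically, reapplies the first estimate to $T_{2^{n}t}f$, and uses (i) --- through the cancellation built into the molecule --- to obtain $\|T_{2^{n-1}t}f\|_1\lesssim 2^{-rn}$ and sum a geometric series. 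This is where (i) and (ii) actually enter, not through discretization or interpolation. Finally, $H_1^G(\T)=H_1^S(\T)$ is not obtained by a pointwise comparison but falls out of the duality: once $(H_1^S(\T))^*=bmo(\T)$ is known, Theorem~0.1 gives $\|f\|_{H_1^S}\lesssim\|f\|_{H_1^S}^{1/2}\|f\|_{H_1^G}^{1/2}$, which self--improves to $\|f\|_{H_1^S}\lesssim\|f\|_{H_1^G}$. If you wish to salvage your approach, the molecules above are precisely the tent--localized test elements that your hard duality direction is missing.
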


Our argument does not rely on any geometric/metric structure of $M$ nor on the kernel of the semigroups of operators. In fact, our argument only needs an abstract $L_\infty$ space and a standard semigroup of operators on this space satisfying the assumptions in Theorem 0.1 and 0.2. We consider these assumptions as an reflection of the geometric properties of $M$, as what in the mind of many other researchers, e.g. D. Bakry, etc. This abstract argument allows to extend our main results to the noncommutative setting, which is our final goal. The drawback is that we require the semigroups of operators are positive preserving, while Doung/Yan and Hofmann/Mayboroda's theorems go beyond those type of semigroups of operators.

 From the point of view of functional analysis, every $L_\infty$ space on a sigma-finite measure spaces is a commutative von Neumann algebra. This led to defining noncommutative $L^p$ spaces as von Neumann algebras ${\cal M}$ with ``nice" linear functionals, called traces and denoted by $\tau$, which play the role of integration with respect to $\mu$.

 The importance of analyzing semigroups of operators on von Neumann algebras has
been impressively demonstrated by the recent work of Popa and
Ozawa [OP10] and also occurs in the work of Shlyahktenko/Connes [CS05] on Betti numbers for
von Neumann algebras. M. Junge and the author of this article build up a connection between semigroups of operators on von Neumann algebras and M. Rieffel's quantum metric spaces (see [Rie], [JM10]).

Noncommutative analogues of analytic Hardy spaces have been developed mainly by W. Arveson (see [A67]).  Pisier/Xu and their collaborators have established the noncommutative theory of martingale Hardy spaces (see [PX97]). Junge-Le Merdy-Xu studied noncommutative real Hardy spaces for $1<p<\infty$ in [JLX06]. In particular, an $H_1$-BMO duality for noncommutative martingales is proved in [PX97] and [JX03].  [M07] proves an analogue of the classical real variable $H_1$-BMO duality in the semi-commutative case. [M08] is a first try on a real variable $H_1$-BMO duality in the general noncommutative setting. [JM12] established an interpolation result between semigroup BMO spaces and noncommutative $L_p$ spaces.  Noncommutative fourier multiplier theories are further developed in [JM10] and [JMP] by using BMO spaces associated with semigroups of operators defined above. This article improves the method developed in [M08] and extends Theorem 0.1 and 0.2 to the noncommutative case in Section 3.

\section{Preliminaries}

\subsection{Semigroups of operators}

Let $(M ,\sigma , \mu )$ be a sigma-finite measure space. Let $L^p(M)$ be the space of all complex valued $p$-integrable functions on $M$. Denote by $f^*$  the pointwise complex conjugate of a function $f$ on $M$.

\begin{definition}\label{standard}
A family of operators  $(T_y)_y$ is a {\it standard} semigroup of operators, if
$T_{y_1}T_{y_2}=T_{y_1+y_2},T_0=id$ and

(i) $T_y$ are contractions on $L^p(M)$ for all $1\leq p\leq\infty.$

(ii) $T_y$ are symmetric, i.e. $T_y=T^*_y$ on $L^2(M)$.

(iii) $T_y(1)=1$

(iv) $T_y(f)\rightarrow f$ in $L^2$ as $y\rightarrow0+$ for $f\in L^2.$
\end{definition}
The conditions (i), (iii) above imply $T_y$ is \textit{positivity preserving}
for each $y$, i.e. $T_y(f)\geq 0$ if $f\geq 0.$ We will need the following Kadison-Schwarz inequality for unital
(completely) positive contraction $T$ on $L_p(M)$,
\begin{eqnarray}\label{cp}
|T(f)|^2\leq T(|f|^2),\ \ \ \ \forall f\in L_p(M).
\end{eqnarray}

A standard semigroup $(T_y)$ always admits an
infinitesimal generator $L=\lim_{y\rightarrow 0}\frac {T_y-id}{y}.$
$L$ is a unbounded operator densely defined on $L_2(M)$. We will write $T_y=e^{yL}$.
Some of the conditions (i)-(iv) may be weaken but that is beyond the main interests of this article.

\begin{definition}
P. A Meyer's gradient form $\Gamma$ (also called ``Carr\'e du Champ") associated with $T_t$ is defined as,
 \begin{eqnarray}\label{Gamma}
  2\Gamma(f,g) = L(f^{*}g)-(L(f^{*})g)-f^{*}(L(g)),
  \end{eqnarray}
for $f,g$ with $f^*,g, f^*g\in D(L)$. 
\end{definition}
When $f=g$, we simply write $\Gamma(f)=\Gamma(f,g)$.

For convenience, we assume
that there exists a $^*$-algebra $\A$ which is weak$^*$ dense in
$L_\infty(M)$ such that $T_s(\A)\subset\A\subset D(L)$. This assumption is
to guarantee that $\Gamma(T_sf,T_sg)$ make senses for $f,g\in \A$,
which is not easy to verify in general, although the other form
$T_t\Gamma(T_sf,T_sg)$ is what we need essentially in this article
and can be read as
$LT_t(T_sf^{*}T_sg)-T_t((LT_sf^{*})T_sg)-T_t(T_sf^{*}(LT_sg))$ for
any $f,g\in L_p(M),1\leq p\leq\infty, s,t>0$.

It is easy to verify that for $L=\triangle=\frac{\partial ^2}{\partial^2 x}$, $\Gamma(f,g)=\frac{\partial f^*}{\partial x}\cdot \frac{\partial g}{\partial x}$.
It is well known that the positive-preserving property of a standard semigroup of operators implies that $\Gamma(f)\geq0$ for all $f$.
\begin{definition}
Bakry-\'Emery's iterated gradient form $\Gamma_2$ is defined as
 \begin{eqnarray}
\Gamma_2(f,g) = L\Gamma(f,g)-\Gamma(f^{*}L(g))-\Gamma((Lf^{*})g),
  \end{eqnarray}
for $f,g\in \A$.
\end{definition}
When $f=g$, we simply write $\Gamma_2(f)=\Gamma_2(f,g)$.
For $L=\triangle=\frac{\partial ^2}{\partial^2 x}$, $\Gamma_2(f,g)=\frac{\partial^2 f^*}{\partial x^2}\cdot \frac {\partial^2 g}{\partial x^2}$.
However, $\Gamma_2(f,f)\geq0$ is not always true.
For $L$ being the Laplace-Beltrami operator on a complete manifold, $\Gamma_2(f)\geq0$ is equivalent to the positivity of the Ricci curvature of the manifold.
See [BBG12] and references therein for more details on $\Gamma_2$ and examples of semigroups of operators satisfying the ``$\Gamma_2\geq0$" condition, which is the so-called curvature-dimension criterion  $CD(0,\infty)$ in [BBG12]. We still use the relative old notation ``$\Gamma_2\geq0$" because it makes sense even in the noncommutative setting.

It is easy to check that, for a standard semigroup of operator $(T_t)_t$, $\Gamma_2(f)\geq0$ iff
 \begin{eqnarray}\label{Gamma_2}
  \Gamma (T_v f) \leq T_v \Gamma (f)
  \end{eqnarray}
for all $v>0,f\in \A$.

We will need the following Lemma due to P.A. Meyer. We add a short proof for the convenience of the reader.
\begin{lemma}\label{lemma}
 For any $f\in L_p(M),1\leq p\leq\infty, s>0$, we have
 \[ T_s|f|^2-|T_sf|^2 = 2 \int_{0}^s  T_{s-t}\Gamma(T_tf) dt  .\]
\end{lemma}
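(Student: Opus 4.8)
The plan is to prove the identity $T_s|f|^2-|T_sf|^2 = 2\int_0^s T_{s-t}\Gamma(T_tf)\,dt$ by recognizing the left-hand side as the integral over $[0,s]$ of the $t$-derivative of a suitable one-parameter family, and then identifying that derivative with the integrand on the right. Concretely, I would introduce the function
\[
\phi(t) = T_{s-t}\bigl(|T_tf|^2\bigr) = T_{s-t}\bigl((T_tf)^*(T_tf)\bigr), \qquad t\in[0,s],
\]
so that $\phi(0) = T_s|f|^2$ and $\phi(s) = |T_sf|^2$. Then the claimed identity is exactly $\phi(s)-\phi(0) = -2\int_0^s T_{s-t}\Gamma(T_tf)\,dt$, i.e. $\phi'(t) = -2\,T_{s-t}\Gamma(T_tf)$, and it remains to verify this derivative computation and to justify the fundamental theorem of calculus in this setting.

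The key step is the differentiation. Writing $g_t = T_tf$, we have $\frac{d}{dt}g_t = Lg_t$ and $\frac{d}{dt}T_{s-t} = -LT_{s-t}$ (as strongly continuous semigroup derivatives, valid since we may first work with $f\in\A$, where $T_tf\in\A\subset D(L)$, and pass to general $f$ by density and the contractivity of all operators involved). By the product rule,
\[
\phi'(t) = -LT_{s-t}\bigl(g_t^* g_t\bigr) + T_{s-t}\Bigl((Lg_t)^* g_t + g_t^*(Lg_t)\Bigr)
= -T_{s-t}\Bigl(L(g_t^* g_t) - (Lg_t^*)g_t - g_t^*(Lg_t)\Bigr),
\]
using that $L$ commutes with $T_{s-t}$ on the relevant domain and that $(Lg_t)^* = Lg_t^*$ by symmetry of the semigroup. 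The bracketed expression is precisely $2\Gamma(g_t, g_t) = 2\Gamma(T_tf)$ by the definition \eqref{Gamma} of Meyer's gradient form. Hence $\phi'(t) = -2\,T_{s-t}\Gamma(T_tf)$, and integrating from $0$ to $s$ gives the result.

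The main obstacle is purely one of rigor rather than ideas: justifying that $\phi$ is differentiable with the stated derivative and that the fundamental theorem of calculus applies, given that $L$ is an unbounded operator and $\Gamma$ involves a difference of unbounded terms. The clean way around this, as the excerpt itself suggests, is to interpret $T_{s-t}\Gamma(T_tf)$ directly as $LT_{s-t}(g_t^*g_t) - T_{s-t}((Lg_t^*)g_t) - T_{s-t}(g_t^*(Lg_t))$ — which makes sense for any $f\in L_p(M)$ and $s,t>0$ — and to carry out the differentiation for $f$ in the dense $^*$-algebra $\A$ where every term is genuinely well defined and $t\mapsto T_tf$ is smooth into $D(L)$. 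Both sides of the asserted identity are continuous in $f$ with respect to the $L_p$ (or weak$^*$) topology for fixed $s$, so once it holds on $\A$ it extends to all of $L_p(M)$, $1\le p\le\infty$. A minor point to watch is the integrability of $t\mapsto T_{s-t}\Gamma(T_tf)$ near the endpoints, but positivity of $\Gamma(T_tf)$ together with $\phi$ being monotone and bounded handles this automatically.
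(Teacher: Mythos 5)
Your proposal is correct and follows essentially the same route as the paper: both introduce the interpolating family $F_t = T_{s-t}(|T_tf|^2)$, differentiate in $t$ to produce $-2\,T_{s-t}\Gamma(T_tf)$ via the definition of $\Gamma$, and integrate from $0$ to $s$. If anything, you track the factor of $2$ more carefully than the paper's displayed computation (which drops it in an apparent typo), and your remarks on domains and density are a reasonable supplement to the paper's terser argument.
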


\begin{proof} For $s$ fixed, let
\[
F_t=T_{s-t}(|T_tf|^2).
\]
Then
\begin{eqnarray*}
\frac{\partial T_{s-t}(|T_tf|^2)}{\partial t} &=&\frac{\partial T_{s-t}}{%
\partial t}(|T_tf|^2)+T_{s-t}[(\frac{\partial T_t}{\partial t}%
f^{*})f]+T_{s-t}[f^*(\frac{\partial T_t}{\partial t}f)] \\
&=&-T_{s-t}\Gamma (T_tf).
\end{eqnarray*}
Therefore
\begin{eqnarray*}
T_s|f|^2-|T_sf|^2 =-F_s+F_0=\int_0^sT_{s-t}\Gamma (T_tf)dt.
\end{eqnarray*}
\end{proof}
\begin{definition}
\label{sub} Given a standard semigroup of operators $(T_y)_y$ with an infinitesimal generator
$L$, the semigroup $(P_y)_y$ defined as
\[
P_y=e^{-y\sqrt{-L}}
\]
is again a standard semigroup of operators. We call it the subordinated
Poisson semigroup of $(T_y)_y$.
\end{definition}

Note $P_y$ is chosen such that
\begin{eqnarray}
(\frac{\partial ^2}{\partial s^2}+L)P_s=0.  \label{1}
\end{eqnarray}
It is well known that (see [St2])
\begin{eqnarray}
P_y=\frac 1{2\sqrt{\pi }}\int_0^\infty ye^{-\frac{y^2}{4u}}u^{-\frac
32}T_udu.  \label{idpy}
\end{eqnarray}
Apply $\Gamma(f)\geq0$ and (\ref{idpy}), it is easy to deduce that $\Gamma_2\geq 0$ also
implies $\Gamma(P_vf,P_vf)\leq P_v\Gamma(f)$ for any $v>0$.

(\ref{idpy}) also implies that
\begin{eqnarray}
\frac{P_y}y(f)\leq \frac{P_t}t(f) \label{sbd} \ \ {\rm and}\ \
|(P_y-P_{y+t})f|\leq \frac {8t}y P_{\frac y2}f,
\end{eqnarray}
for any $0\leq t\leq y, f\geq0$,
since $T_u$ is positive and $e^{-\frac{y^2}{4u}}u^{-\frac 32}$ is a function
decreasing with respect to $y.$ \smallskip

The classical heat semigroup and Ornstein-Uhlenbeck semigroup on $\Bbb{R}^n$ are typical examples of
standard semigroups of operators. They can be presented as
\begin{eqnarray}
T_t&=&e^{t\triangle}\\
O_t&=&e^{t(\frac12\triangle+x\cdot \frac{\partial }{\partial x})}
\end{eqnarray}
with $\triangle= \frac{\partial^2 }{\partial x^2} =\sum_{i=1}^n\frac{\partial ^2}{\partial x_i^2}, $ the
Laplacian operator on $\Bbb{R}^n$.

It is easy to check that for these two semigroups,

\begin{eqnarray}\Gamma(f,g)=\frac{\partial }{\partial x}f^*\cdot\frac{\partial }{\partial x}g,\end{eqnarray}

and they both satisfy the $\Gamma_2\geq0$ condition (\ref{Gamma_2}).
\subsection{BMO spaces associated with semigroups of operators}
Recall we set
\begin{eqnarray}
\|f\|_{\mathrm{bmo}({\cal T}) }&=&\sup_{0<t<\infty}\| T_t|f|^2-|T_tf|^2\|_{L_\infty}^\frac12.\\
\|f\|_{\mathrm{BMO}({\cal T}) }&=&\sup_{0<t<\infty}\| T_t|f-T_tf|^2\|_{L_\infty}^\frac12. \label{BMOT}
\end{eqnarray}

It is easy to see by Kadison-Schwarz inequality (\ref{cp}) that for any positive sequence $t_n$ which converges to $0$,

\begin{eqnarray}
\|f\|_{\mathrm{bmo}({\cal T}) }&\simeq&\sup_{n\in {\Bbb N}}\| T_{t_n}|f|^2-|T_{t_n}f|^2\|_{L_\infty}^\frac12.\\
\|f\|_{\mathrm{BMO}({\cal T}) }&\simeq&\sup_{n\in {\Bbb N}} \|T_{t_n}|f-T_{t_n}f|^2\|_{L_\infty}^\frac12. \label{disBMOT}
\end{eqnarray}

\begin{lemma} \label{JM2}([JM12]) Let $(T_t)$ be a standard semigroup of operators.  Then
(i) $\|f\|_{bmo(\T)}=0$ iff $\|f\|_{BMO(\T)}=0$ iff $f\in ker(L)=\{f\in D(L) Lf=0\}$.
(ii) If in addition  $(T_t)$ satisfies the $\Gamma_2\geq0$ condition (\ref{Gamma_2}), then
 $$\|f\|_{BMO(\T)}\simeq \|f\|_{bmo(\T)}+\sup_t
 \|T_tf-T_{2t}f\|.$$
\end{lemma}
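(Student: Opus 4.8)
The plan is to prove the two assertions separately, using Meyer's Lemma \ref{lemma} as the main computational engine together with the Kadison--Schwarz inequality \eqref{cp}.

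\textit{Part (i).} The implication that $f\in\ker(L)$ forces both norms to vanish is immediate: if $Lf=0$ then $T_tf=f$ for all $t$, so $T_t|f-T_tf|^2=T_t|0|^2=0$ and $T_t|f|^2-|T_tf|^2=T_t|f|^2-|f|^2$, which vanishes by Lemma \ref{lemma} since $\Gamma(T_tf)=\Gamma(f)$ and\ldots actually the cleanest route is: $\Gamma(f)\ge 0$ and $2\int_0^t T_{t-s}\Gamma(T_sf)\,ds = T_t|f|^2-|T_tf|^2$, so if $Lf=0$ we must show $\Gamma(f)=0$; but $\tau(\Gamma(f)) = -\tau(f^*Lf) = 0$ by the integration-by-parts identity hidden in the definition \eqref{Gamma}, and $\Gamma(f)\ge 0$ forces $\Gamma(f)=0$ a.e., hence $T_t|f|^2-|T_tf|^2=0$. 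Conversely, if $\|f\|_{\mathrm{bmo}(\T)}=0$ then by Lemma \ref{lemma} $\int_0^t T_{t-s}\Gamma(T_sf)\,ds=0$ for every $t$; since the integrand is a nonnegative operator (as $\Gamma(T_sf)\ge 0$ and $T_{t-s}$ is positivity preserving), we get $\Gamma(T_sf)=0$ for a.e.\ $s$, and letting $s\to 0^+$ (using (iv)) gives $\Gamma(f)=0$, whence $\frac{d}{dt}\|T_tf\|_2^2 = -2\tau(\Gamma(T_tf))=0$, so $T_tf$ is constant in $L_2$, i.e.\ $T_tf=f$ and $Lf=0$. The same argument works starting from $\|f\|_{\mathrm{BMO}(\T)}=0$ after observing $T_t|f-T_tf|^2 = T_t|f|^2 - |T_tf|^2 - (T_t(f^*)T_tf - \overline{T_t(f^*)T_tf}) + \ldots$; more simply, $\|f\|_{\mathrm{bmo}(\T)}\le \|f\|_{\mathrm{BMO}(\T)}$ always holds by a direct expansion using $T_t(\overline{T_tf}\,g) $-type identities, so $\|f\|_{\mathrm{BMO}(\T)}=0 \Rightarrow \|f\|_{\mathrm{bmo}(\T)}=0$, reducing to the previous case.

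\textit{Part (ii).} Here I would expand the square inside $\|f\|_{\mathrm{BMO}(\T)}$: writing $g = f - T_tf$,
\[
T_t|f-T_tf|^2 = T_t|f|^2 - T_t(f^*)\,T_tf - \overline{T_t(f^*)}\;\overline{}\ldots
\]
— concretely, $T_t|f-T_tf|^2 = T_t|f|^2 - (T_tf^*)(T_tf) - (T_tf^*)(T_tf) + |T_tf|^2 \cdot$(using $T_t(T_tf)=T_{2t}f$) $= \big(T_t|f|^2 - |T_tf|^2\big) + |T_t f - T_{2t}f|^2 + 2\,\mathrm{Re}\,\big[(T_tf^* - T_{2t}f^*)\cdot(\text{something})\big]$. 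After collecting terms one obtains an identity of the shape
\[
T_t|f-T_tf|^2 \;=\; \big(T_t|f|^2-|T_tf|^2\big) \;-\; |T_{2t}f|^2 \;+\; \text{cross terms involving } T_{2t}f,
\]
and the upshot is the two-sided estimate $\|f\|_{\mathrm{BMO}(\T)}^2 \lesssim \|f\|_{\mathrm{bmo}(\T)}^2 + \sup_t\|T_tf-T_{2t}f\|_\infty^2$ together with the reverse $\gtrsim$. The $\lesssim$ direction is just the triangle inequality in $L_\infty$ applied to this expansion, bounding each piece. For the $\gtrsim$ direction one must recover $\sup_t\|T_tf-T_{2t}f\|_\infty$ from $\|f\|_{\mathrm{BMO}(\T)}$: the bound $\|f\|_{\mathrm{bmo}(\T)}\le\|f\|_{\mathrm{BMO}(\T)}$ handles one summand, and to control $\|T_tf-T_{2t}f\|_\infty$ one writes $T_tf - T_{2t}f = T_t(f - T_tf)$ and applies Kadison--Schwarz: $|T_t(f-T_tf)|^2 \le T_t|f-T_tf|^2$, so $\|T_tf-T_{2t}f\|_\infty^2 \le \|T_t|f-T_tf|^2\|_\infty \le \|f\|_{\mathrm{BMO}(\T)}^2$. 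That is in fact immediate and clean. So the only genuine work is the algebraic expansion in the $\lesssim$ direction, where the $\Gamma_2\ge 0$ hypothesis \eqref{Gamma_2} should enter to control the cross terms — specifically to bound a term like $T_t\big((f^*-T_tf^*)(f-T_tf)\big)$-residuals by $T_v\Gamma$-type quantities dominated by $\|f\|_{\mathrm{bmo}(\T)}$.

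\textit{Main obstacle.} The delicate point is Part (ii), $\lesssim$ direction: the naive expansion of $T_t|f-T_tf|^2$ produces a cross term that is \emph{not} obviously controlled by the two quantities on the right, and it is precisely here that one needs $\Gamma_2\ge 0$ (equivalently $\Gamma(T_vf)\le T_v\Gamma(f)$) to absorb it. I expect the argument runs: $T_t|f-T_tf|^2 - (T_t|f|^2 - |T_tf|^2) = |T_tf|^2 - 2\,\mathrm{Re}(T_{2t}f^*\cdot T_tf) + |T_{2t}f|^2 - \big(|T_tf|^2 - $ correction$\big)$, and after using $|T_tf - T_{2t}f|^2 = |T_tf|^2 - 2\mathrm{Re}(\overline{T_{2t}f}T_tf) + |T_{2t}f|^2$ the residual is something like $2\mathrm{Re}\big[\overline{T_{2t}f}(T_{2t}f - T_tf)\big]$ plus $\big(T_t|f|^2 - |T_tf|^2\big)$-type terms at scale $2t$ versus $t$, all of which telescope or are dominated using $\|f\|_{\mathrm{bmo}(\T)}$ and Lemma \ref{lemma}. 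Since this is the cited result \cite{JM12}, I would in the writeup reproduce just enough of the expansion to make the two inequalities transparent and defer finer bookkeeping to the reference.
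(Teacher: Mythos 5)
The paper does not actually prove this lemma: it is quoted from [JM12], so there is no internal proof to compare against and your sketch has to stand entirely on its own. Part (i) is essentially sound, though roundabout: once $\|f\|_{BMO(\T)}=0$ forces $T_t|f-T_tf|^2=0$, the fact that $T_t$ preserves the integral of nonnegative functions (it is unital and self-adjoint) already gives $|f-T_tf|^2=0$, i.e.\ $T_tf=f$ for all $t$, hence $f\in\ker L$; the $bmo$ case is handled the same way through Lemma \ref{lemma} and $\Gamma\ge0$, and your detour through $\tau(\Gamma(f))=-\tau(f^*Lf)$ is workable but unnecessary.

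The genuine gap is in part (ii), and it is twofold. First, your algebraic expansion is wrong as written: $T_t|f-T_tf|^2=T_t|f|^2-T_t\bigl(f^*(T_tf)\bigr)-T_t\bigl((T_tf)^*f\bigr)+T_t|T_tf|^2$, and you repeatedly replace $T_t\bigl(f^*(T_tf)\bigr)$ by $(T_tf^*)(T_tf)$ and $T_t|T_tf|^2$ by $|T_{2t}f|^2$; since $T_t$ is not multiplicative none of these identifications hold, and the ``cross terms'' you hope will telescope are exactly the terms you have mishandled. The clean route is to center at a \emph{scalar} pointwise: for a constant $c$ one has the exact identity $T_t(|f-c|^2)(x)=(T_t|f|^2-|T_tf|^2)(x)+|T_tf(x)-c|^2$; taking $c=T_{2t}f(x)$, splitting $f-T_tf=(f-c)-(T_tf-c)$, and using Lemma \ref{lemma} to write $T_t|T_tf|^2-|T_{2t}f|^2=2\int_t^{2t}T_{2t-u}\Gamma(T_uf)\,du\le T_{2t}|f|^2-|T_{2t}f|^2\le\|f\|_{bmo(\T)}^2$ yields the $\lesssim$ direction with only $\Gamma\ge0$. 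Second, and more seriously, for the $\gtrsim$ direction you dispose of $\|f\|_{bmo(\T)}\le c\,\|f\|_{BMO(\T)}$ with the phrase ``always holds by a direct expansion,'' but this inequality is precisely the nontrivial content of the equivalence (part (i) only identifies the two null spaces, it gives no norm comparison), your displayed attempts at that expansion trail off unresolved, and a naive estimate only yields the useless recursion $\|T_t|f|^2-|T_tf|^2\|_\infty\le 4\|f\|_{BMO(\T)}^2+2\|T_{2t}|f|^2-|T_{2t}f|^2\|_\infty$. This is where the hypothesis $\Gamma_2\ge0$ has to do real work; your Kadison--Schwarz bound $\|T_tf-T_{2t}f\|_\infty\le\|f\|_{BMO(\T)}$ is correct but covers only the easy half of that direction. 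In short, you have located the difficulty in the wrong inequality and left the hard one unproved.
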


\begin{lemma}\label{JN}Suppose $(T_t)$ is a standard semigroup of operators. Then
\begin{eqnarray}\label{JNs}
\sup_t\|(T_{t}|f-T_{t}f(\cdot)|^p)(\cdot)\|_\infty^\frac1p\simeq^p \|f\|_{bmo(\T)},
\end{eqnarray}
for any $0<p<\infty$. And, if $(T_t)$ satisfies the $\Gamma_2\geq$ condition (\ref{Gamma_2}), then
\begin{eqnarray}\label{JNSS}
\sup_t\|T_{t}|f-T_{t}f|^p\|_\infty^\frac1p\simeq^p\|f|\|_{BMO(\T)},
\end{eqnarray}
for any $0< p<\infty$.
\end{lemma}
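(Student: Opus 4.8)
\emph{Proposal.} The plan is to reduce both general-$p$ equivalences to the case $p=2$, which is immediate, and to a single John--Nirenberg-type self-improvement estimate. For $p=2$ there is nothing to prove: expanding $|f-(T_tf)(y)|^2$ and using $T_t1=1$ gives the pointwise identity $[T_t(|f-(T_tf)(y)|^2)](y)=T_t(|f|^2)(y)-|T_tf(y)|^2$, so the left-hand side of (\ref{JNs}) at $p=2$ is exactly $\|f\|_{bmo(\T)}$, while the left-hand side of (\ref{JNSS}) at $p=2$ is by definition (\ref{BMOT}) equal to $\|f\|_{BMO(\T)}$. For general $p$ one half of each equivalence is soft: since $T_t$ is unital and positive, Jensen's inequality gives $T_t(|g|^p)\le(T_t|g|^2)^{p/2}$ for $0<p\le2$ and $T_t(|g|^2)\le(T_t|g|^p)^{2/p}$ for $p\ge2$, with $g=f-(T_tf)(\cdot)$ resp. $g=f-T_tf$; applied pointwise and then taking $\sup_t$, this yields the ``$\le$'' direction of (\ref{JNs}), (\ref{JNSS}) for $p\le2$ and the ``$\ge$'' direction for $p\ge2$.

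It therefore suffices to prove
\[
B_p:=\sup_t\big\|\,(T_t|f-(T_tf)(\cdot)|^p)(\cdot)\,\big\|_\infty^{1/p}\le C(p)\,\|f\|_{bmo(\T)}\qquad(p=2,4,8,\dots)
\]
together with the analogous estimate in which $bmo$ is replaced by $BMO$ and $f-(T_tf)(\cdot)$ by $f-T_tf$. Indeed, once this is known for all dyadic $p=2^k$, every remaining inequality --- the ``$\ge$'' direction of (\ref{JNs}), (\ref{JNSS}) for $0<p<2$, and the case of non-dyadic $p$ --- follows by Hölder's inequality in the probability states $\nu_y:=T_t(\cdot)(y)$, interpolating the relevant $L^p(\nu_y)$-norm between two dyadic exponents, together with the identity $B_2=\|f\|_{bmo(\T)}$ recorded above.

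The core is thus a kernel-free John--Nirenberg inequality. Normalizing $\|f\|_{bmo(\T)}\le1$, the target is the exponential bound $\sup_t\big\|(T_t\exp(\lambda|f-(T_tf)(\cdot)|))(\cdot)\big\|_\infty\le C$ for absolute $\lambda,C>0$, equivalently $B_p\lesssim p$ for all $p$. The natural route is a stopping-time iteration over the dyadic scales $t,t/2,t/4,\dots$: Chebyshev gives the starting bound $T_t(\mathbf 1_{\{|f-(T_tf)(y)|>\lambda\}})(y)\le\lambda^{-2}$, and one tries to gain a fixed geometric factor at each halving of the scale by writing $f-(T_tf)(y)=\big(f-(T_{t/2}f)(\cdot)\big)+\big((T_{t/2}f)(\cdot)-(T_tf)(y)\big)$, controlling the first term by the $bmo$ hypothesis (note $\|T_sf\|_{bmo(\T)}\le\|f\|_{bmo(\T)}$ by Kadison--Schwarz) and the second by a maximal/Carleson-type estimate for $(T_s)$. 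The main obstacle is precisely this last point: building a metric-free analogue of the Calder\'on--Zygmund stopping time and showing that the ``error'' term $(T_{t/2}f)(\cdot)-(T_tf)(y)$ is effectively bounded --- not merely $bmo$-bounded --- where it matters. This is the step at which the kernel-free argument genuinely needs a Carleson embedding theorem for $(T_s)$ (in the role of the weak-$(1,1)$ packing estimate of the classical proof), or, alternatively, a Rota-type dilation realizing $(T_{2^{-k}})$ as a compression of a reverse martingale, which reduces matters to the classical martingale John--Nirenberg inequality.

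Finally, (\ref{JNSS}) reduces to (\ref{JNs}) by Lemma \ref{JM2}(ii): under $\Gamma_2\ge0$ one has $\|f\|_{BMO(\T)}\simeq\|f\|_{bmo(\T)}+\sup_t\|T_tf-T_{2t}f\|_\infty$. Since $f-T_tf$ is a genuine function and $T_tf-T_{2t}f$ requires no freezing, Meyer's identity (Lemma \ref{lemma}) together with the $\Gamma_2\ge0$ form $\Gamma(T_vf)\le T_v\Gamma(f)$ of (\ref{Gamma_2}) lets the same iteration run for $f-T_tf$, the only additional contribution being $\sup_t\|T_tf-T_{2t}f\|_\infty$, which is already $\lesssim\|f\|_{BMO(\T)}$; combining with (\ref{JNs}) applied to $f$ then gives (\ref{JNSS}).
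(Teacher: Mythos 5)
Your reductions are all sound and match the paper's: the $p=2$ identities, the Jensen/Kadison--Schwarz soft directions, the H\"older interpolation between dyadic exponents that recovers $0<p<2$ from $p=2$ and a large exponent, and the passage from (\ref{JNs}) to (\ref{JNSS}) via the triangle inequality in the states $T_t(\cdot)(y)$ together with Lemma \ref{JM2}(ii). But the heart of the lemma --- the upper bound $\sup_t\|(T_t|f-(T_tf)(\cdot)|^p)(\cdot)\|_\infty^{1/p}\leq C_p\|f\|_{bmo(\T)}$ for large $p$ --- is not actually proved in your proposal. You lay out a stopping-time iteration and then explicitly concede that its key step (a metric-free Calder\'on--Zygmund stopping time controlling the error term $(T_{t/2}f)(\cdot)-(T_tf)(y)$ pointwise) is an unresolved obstacle, offering a dilation to martingales only as an unexecuted ``alternative.'' That alternative is in fact the paper's entire proof, and it is short: one takes a Markov process $(W_s)$ dilating the semigroup, $E_sf(W_t)=T_{t-s}f(W_s)$, fixes $t$, and considers the martingale $E_s(f(W_t))$ along $s=2^{-n}t$. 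The Markov property gives the exact transference identity that on $\{W_s=v\}$ one has $E_s(|f(W_t)-E_sf(W_t)|^p)=(T_{t-s}|f-T_{t-s}f(v)|^p)(v)$, so the conditional $p$-th moments of the martingale \emph{are} the quantities in (\ref{JNs}), and the John--Nirenberg inequality for the little BMO space of martingales delivers the equivalence for all $p$ at once. No stopping time on $M$ and no Carleson embedding is needed. As written, your argument has a genuine gap at exactly the step that makes the lemma nontrivial; to close it you would need to actually construct the dilation and verify the transference identity (or else supply the missing packing/stopping-time estimate, which is much harder in this kernel-free setting).
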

\begin{proof}
Let $W_t$ be a Markov process, so that
\begin{eqnarray}
E_sf(W_t)=T_{t-s}f(W_s),
\end{eqnarray}
for any $0\leq s<t<\infty$. The Markov process $(W_t)$ can be constructed  by setting its covariance function $C_{s,t}=K_{t-s}$, the kernel of $(T_t)$.
Fix a $t>0$, we consider the martingale
$E_s(f(W_t)), s=2^{-n}t, n\in{\Bbb N}$.
By the John-Nirenberg inequality for the little BMO space of martingales, we have,
\begin{eqnarray}\label{JNM}
\sup_{s=2^{-n}t}\|E_s(|f(W_t)-E_sf(W_t)|^p)\|_\infty\simeq^p\sup_{s=2^{-n}t}\|E_s|f(W_t)-E_sf(W_t)|\|_\infty^p
\end{eqnarray}
On the other hand, by the Markov property,  we have that, on the set $\{W_s=v\}$,
\begin{eqnarray}
E_s(|f(W_t)-E_sf(W_t)|^p)&=&E_s(|f(W_t)-T_{t-s}f(v)|^p)\nonumber
\\&=&(T_{t-s}|f-T_{t-s}f(v)|^p)(v).
\end{eqnarray}
Therefore,
\begin{eqnarray}\label{idSM}
\|E_s(|f(W_t)-E_sf(W_t)|^p)\|_\infty=\|(T_{t-s}|f-T_{t-s}f(\cdot)|^p)(\cdot)\|_\infty.
\end{eqnarray}
Combining (\ref{JNM}) and (\ref{idSM}) we obtain the John-Nirenberg inequality for $bmo(\T)$,
\begin{eqnarray}
\sup_t\|(T_{t}|f-T_{t}f(\cdot)|^p)(\cdot)\|_\infty^\frac1p\simeq^p \|f\|_{bmo(\T)},
\end{eqnarray}
for any $0<p<\infty$.

Now, for any $p>2$,
\begin{eqnarray*}
&&(T_{t}|f-T_{t}f|^p(\cdot))^\frac1p\\
&\leq& (T_{t}|f-T_{t}f(\cdot)|^p(\cdot))^\frac1p+(T_{t}|T_tf-T_{t}f(\cdot)|^p(\cdot))^\frac1p\\
&\leq& [T_{t}(|f-T_{t}f(\cdot)|^p)(\cdot)]^\frac1p+[T_{2t}|f-T_{2t}f(\cdot)|^p(\cdot)]^\frac1p+(|T_t(f)-T_{2t}f|)(\cdot).
\end{eqnarray*}

 Taking supremum on both sides, we get by (\ref{JNs}) and Lemma \ref{JM2} (ii) that
\begin{eqnarray}
\sup_t\|T_{t}(|f-T_{t}f|^p)\|^\frac1p_\infty&\leq&
 2c_p\|f\|_{bmo(\T)}+\sup_t\|T_t(f)-T_{2t}f\|_\infty\nonumber\\
&\leq&cc_p\|f\|_{BMO(\T)}.\label{p>2}
\end{eqnarray}
For $0<q<2$, let $p=4-q$, by H\"older's inequality,
\begin{eqnarray*}
T_{t}(|f-T_{t}f|^2)(\cdot)&=& T_{t}(|f-T_{t}f|^\frac q2|f-T_{t}f|^{\frac {4-q}2})(\cdot)\\
&\leq& [T_{t}(|f-T_{t}f|^q)(\cdot)]^\frac12\cdot[T_{t}(|f-T_{t}f|^p)(\cdot)]^\frac12.
\end{eqnarray*}
Taking supremum on both sides and applying (\ref{p>2}), we get
\begin{eqnarray*}
\|f\|_{BMO(\T)}\leq c_q\sup_t\|T_{t}(|f-T_{t}f|^q)\|_\infty^\frac14\|f\|_{BMO(\T)}^\frac p4.
\end{eqnarray*}
Therefore,
\begin{eqnarray*}
\|f\|_{BMO(\T)}\leq c_q\sup_t\|T_{t}(|f-T_{t}f|^q)\|_\infty^\frac1q.
\end{eqnarray*}
\end{proof}

\begin{lemma} \label {JM1}([JM12]) Assume that $(T_t)$ is a standard semigroup of operators. Then
$$ [BMO(\T),L^0_1(M)]_{\frac{1}{p}} = L^0_p(M) $$
for
 $1<p<\infty$. If, in addition, $(T_t)$ admits a Markov dilation which has a. u. continuous path, then
$$ [bmo(\T),L^0_1(M)]_{\frac{1}{p}} = L^0_p(M) $$
for  $1<p<\infty$.
\end{lemma}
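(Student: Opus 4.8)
The plan is to prove the two continuous inclusions separately. Write $P_0$ for the projection onto the fixed point space $\ker L$ and let $L_q^0(M)$ denote the range of $I-P_0$ in $L_q(M)$; this is the natural space on which $BMO(\T)$ lives, since by Lemma \ref{JM2}(i) the latter is a normed space only after quotienting out $\ker L$. As $I-P_0$ is bounded on every $L_q$, $1\le q\le\infty$, the classical identity $[L_\infty(M),L_1(M)]_{1/p}=L_p(M)$ passes to the mean‑zero parts, giving $[L_\infty^0(M),L_1^0(M)]_{1/p}=L_p^0(M)$, which I will use freely. The $bmo(\T)$ assertion is handled by the same scheme, the extra hypothesis entering only at the step flagged below.

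For the inclusion $L_p^0(M)\hookrightarrow[BMO(\T),L_1^0(M)]_{1/p}$ I would start from the elementary bound $\|f\|_{BMO(\T)}\le 2\|f\|_\infty$: indeed $|f-T_tf|\le 2\|f\|_\infty$ pointwise and $T_t$ is an $L_\infty$‑contraction, so $T_t|f-T_tf|^2\le 4\|f\|_\infty^2$; likewise $0\le T_t|f|^2-|T_tf|^2\le\|f\|_\infty^2$ gives $\|f\|_{bmo(\T)}\le\|f\|_\infty$. Hence $L_\infty^0(M)\hookrightarrow BMO(\T)$ (and $\hookrightarrow bmo(\T)$) boundedly, and composing with $L_p^0(M)=[L_\infty^0(M),L_1^0(M)]_{1/p}$ yields the desired inclusion with an absolute constant.

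The substantive direction, $[BMO(\T),L_1^0(M)]_{1/p}\hookrightarrow L_p^0(M)$, I would obtain by transferring the problem to martingales. As in the proof of Lemma \ref{JN}, the kernel of $(T_t)$ produces a stationary Markov process $(W_t)$ with $E_sf(W_t)=T_{t-s}f(W_s)$; let $\mathcal N$ be the von Neumann algebra it generates, with the filtration $(E_s)$ of conditional expectations, and let $\mathcal D(f)=f(W_0)$, an isometric embedding of $L_q(M)$ into $L_q(\mathcal N)$ for every $q$, complemented by the normal conditional expectation $\mathbb E$ onto the time‑$0$ algebra, so that $\mathbb E\mathcal D=\mathrm{id}$. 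The computation behind (\ref{idSM}),
\[
E_s\big(|f(W_t)-E_sf(W_t)|^2\big)=\big(T_{t-s}|f-T_{t-s}f(\cdot)|^2\big)(\cdot),
\]
together with Lemmas \ref{JN} and \ref{JM2}, shows that $\mathcal D$ carries $BMO(\T)$ boundedly into the martingale BMO of this filtration. Thus $(BMO(\T),L_1^0(M))$ is, via the coretraction $\mathcal D$ and the retraction $\mathbb E$, a retract of the couple $(BMO_{\mathrm{mart}}(\mathcal N),L_1^0(\mathcal N))$, and the inclusion follows from the known noncommutative martingale interpolation $[BMO_{\mathrm{mart}}(\mathcal N),L_1^0(\mathcal N)]_{1/p}=L_p^0(\mathcal N)$ (cf. [PX97], [JX03]) pulled back along $\mathbb E\mathcal D=\mathrm{id}$.

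The main obstacle is making this transference rigorous, and it is there that the two assertions diverge. For $BMO(\T)$ no further hypothesis is needed: the defining quantity $T_t|f-T_tf|^2$ measures oscillation at scale $t$ rather than up to time $0$, and is robust enough that $\mathcal D f$ always has comparable martingale BMO norm for the kernel‑generated dilation. The little BMO norm is far more pointwise, and matching $\mathcal D\big(bmo(\T)\big)$ with the martingale BMO of the \emph{continuous‑parameter} filtration $(E_s)_{s>0}$ — not merely the dyadic grids handled in Lemma \ref{JN} — requires the Markov dilation to have almost uniformly continuous paths, which is precisely the stated assumption. The remaining steps are bookkeeping: realizing $(T_t)$, its dilation, and all the spaces inside one semifinite von Neumann algebra, passing between the dyadic and the full time parameter, and matching the mean‑zero subspaces on both sides; these are routine once the dilation is in place.
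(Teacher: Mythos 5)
The paper itself does not prove this lemma --- it is quoted from [JM12] --- but the remark immediately following it addresses, and rules out, exactly the route you take: a few-line transference to martingales would work ``if the little BMO space of martingales works as an interpolation-end point for $L_p$ martingale spaces. But that is not the case in general.'' This is where your argument has a genuine gap. The identity (\ref{idSM}) that you invoke transfers the quantity $T_{t-s}|f-T_{t-s}f(\cdot)|^2$, i.e.\ the $bmo(\T)$ norm, into the \emph{little} martingale bmo norm $\sup_s\|E_s|f(W_t)-E_sf(W_t)|^2\|_\infty$ of the dilated function (note also that the natural embedding is $f\mapsto f(W_t)$ at the terminal time, not $f\mapsto f(W_0)$). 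The martingale space that is known to interpolate with $L_1$ --- the result you cite from [PX97]/[JX03] --- is the \emph{big} martingale BMO, which in addition controls the martingale differences; for a non-regular filtration the little bmo is strictly larger and fails to be an interpolation endpoint. Your assertion that ``$\mathcal D f$ always has comparable martingale BMO norm for the kernel-generated dilation'' because $BMO(\T)$ is ``robust enough'' is precisely the unproved step. Controlling the jumps $E_sf(W_t)-E_{s'}f(W_t)=T_{t-s}f(W_s)-T_{t-s'}f(W_{s'})$ in $L_\infty$ is not implied by a bound on $\sup_t\|T_tf-T_{2t}f\|_\infty$ (the two evaluations sit at different points of the path), and Lemma \ref{JM2}(ii), which you would need even to bring that quantity into play, assumes $\Gamma_2\ge0$ --- a hypothesis absent from the first assertion of Lemma \ref{JM1}.

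Your closing paragraph also has the roles of the two statements essentially reversed. The quantity that transfers cleanly to the dilation is the $bmo(\T)$ norm (that is the content of (\ref{idSM})), and the a.u.\ continuity of paths is exactly the hypothesis under which the resulting little martingale bmo does interpolate: for (almost uniformly) continuous martingales there are no jumps, so little and big martingale BMO coincide and the retract argument closes. It is the $BMO(\T)$ statement, valid with no extra hypothesis, that cannot be obtained by this direct transference and requires the genuinely different and considerably longer argument of [JM12]. The easy half of your proof --- $\|f\|_{BMO(\T)}\le2\|f\|_\infty$, $\|f\|_{bmo(\T)}\le\|f\|_\infty$, and the reduction to mean-zero parts via the (bounded) projection onto $\ker L$ --- is fine.
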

Here $L^0_p(M)=L_p(M)/kerL$.

\begin{remark}
Lemma \ref{JM2} could include here a proof of Lemma \ref{JM1} in a few lines, if the little BMO space of martingales works as an interpolation-end point for $L_p$ martingale spaces. But that is not the case in general.
\end{remark}

\section{Proof of Theorem 0.1, 0.2}

\setcounter{theorem}{0}\setcounter{equation}{0}

Recall, for $f\in L_1(M)$, we set
\begin{eqnarray*}
S_\Gamma(f)&=&(\int_0^\infty T_s\Gamma(T_sf)ds)^{\frac12}, \\
G_\Gamma(f)&=&(\int_0^\infty \Gamma(T_sf)ds)^{\frac12}.
\end{eqnarray*}

Set
\begin{eqnarray*}
\|f\|_{\mathcal{H}_{1}^{S}({\cal T})} &=&\|S_\Gamma(f)\|_{L_1}, \\
\|f\|_{\mathcal{H}_{1}^{G}({\cal T})}&=&\|G_\Gamma(f)\|_{L_1}.
\end{eqnarray*}

It is easy to see that
\begin{eqnarray}
\|f\|_{\mathcal{H}_{1}^{G}}\leq 2\|f\|_{\mathcal{H}_{1}^{S}},
\label{HGS}
\end{eqnarray}
by $\Gamma_2\geq0$.
Let $\mathcal{H}^S_{1},\mathcal{H}^G_{1},\mathrm{bmo}({\T})$ and BMO$_({\T})$ be the corresponding Banach spaces.

 Set
truncated square functions $S_{s},G_s $ as follows:
\begin{eqnarray}
S_{s} &=&(\int_s^\infty T_{y-\frac s2}(\Gamma( T_{y+\frac s2}f)dy)^{\frac 12}  \label{Ss} \\
{G}_s &=&(\int_s^\infty \Gamma( T_{2y}f)dy)^{\frac
12}.  \label{Gs}
\end{eqnarray}
$S_{s},G_s$ are constructed to satisfy our key Lemma.

\begin{lemma}
\label{endlem1.1}
\begin{eqnarray}
{G}_s&\leq& S_{s};  \label{endlemma1} \\
\frac{dT_{(a+b)s}(S_{s})}{ds}&\geq& \frac{a+b}b T_{{a s}}(\frac{dT_{bs}(S_{s})}{ds}),\label{endlemma2}\\
\frac{dT_{\frac s2}(S_{s})}{ds}&\leq& 0,  \label{endlemma}
\end{eqnarray}
for any $a,b>0$.
\end{lemma}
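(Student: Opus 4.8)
The plan is to verify the three inequalities directly from the definitions \eqref{Ss}, \eqref{Gs} and the consequence \eqref{Gamma_2} of the $\Gamma_2 \geq 0$ hypothesis, together with the subordination-type monotonicity facts collected in the preliminaries. For \eqref{endlemma1}, I would compare the two integrands pointwise in $y$. The integrand of $G_s^2$ is $\Gamma(T_{2y}f)$, and since $T_{2y} = T_{y-\frac s2}T_{y+\frac s2}$, I apply $\Gamma(T_{y-\frac s2}g) \leq T_{y-\frac s2}\Gamma(g)$ (the form \eqref{Gamma_2} of the curvature condition, valid since $y - \frac s2 \geq \frac s2 > 0$ on the range of integration) with $g = T_{y+\frac s2}f$ to get $\Gamma(T_{2y}f) \leq T_{y-\frac s2}\Gamma(T_{y+\frac s2}f)$. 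Integrating over $y \in (s,\infty)$ gives $G_s^2 \leq S_s^2$, hence \eqref{endlemma1}.

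For \eqref{endlemma}, write $F(s) = \int_s^\infty T_{y-\frac s2}\Gamma(T_{y+\frac s2}f)\,dy = S_s^2$ and differentiate $T_{\frac s2}(F(s))$ in $s$. There are three contributions: the boundary term from the lower limit of the integral (which is $-T_{\frac s2}\big(T_{\frac s2}\Gamma(T_{\frac{3s}2}f)\big) \leq 0$ since $\Gamma \geq 0$ and $T_t$ is positivity preserving), the term where $\frac{d}{ds}$ hits the outer $T_{\frac s2}$ and the inner $T_{y-\frac s2}$ (these partially cancel: differentiating $T_{\frac s2}T_{y-\frac s2} = T_y$ in $s$ gives zero, so after moving $T_{\frac s2}$ inside, the only surviving derivative acts on $T_{y+\frac s2}$), and the term where $\frac{d}{ds}$ hits $T_{y+\frac s2}$ through $\Gamma$. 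Collecting, $\frac{d}{ds}T_{\frac s2}(S_s^2)$ reduces to the nonpositive boundary term plus $T_{\frac s2}\int_s^\infty T_{y-\frac s2}\frac{d}{ds}\Gamma(T_{y+\frac s2}f)\,dy$; the integrand here is $\frac12 \Gamma_2$-type and is $\leq 0$ because $s \mapsto \Gamma(T_{\frac s2 + c}f)$ is nonincreasing (differentiate $\Gamma(T_vf)$ in $v$: $\frac{d}{dv}\Gamma(T_vf) = -2\Gamma_2(T_vf) \leq 0$). Finally, to pass from $\frac{d}{ds}T_{\frac s2}(S_s^2) \leq 0$ to \eqref{endlemma}, i.e. $\frac{d}{ds}T_{\frac s2}(S_s) \leq 0$, I use that $x \mapsto \sqrt{x}$ is operator monotone / use the chain rule $\frac{d}{ds}(S_s) = \frac{1}{2 S_s}\frac{d}{ds}(S_s^2)$ combined with positivity of $T_{\frac s2}$; some care is needed since $S_s$ and its derivative need not commute, but the scalar (commutative) case is immediate and the noncommutative manipulations are deferred to Section 3.

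For \eqref{endlemma2}, the same bookkeeping applies to $T_{(a+b)s}(S_s^2)$: differentiating in $s$, the $\frac{d}{ds}$ acting on the outer semigroup parameter $(a+b)s$ contributes a factor proportional to $(a+b)$, while in $\frac{d}{ds}T_{bs}(S_s^2)$ the corresponding contribution carries a factor $b$; the ratio $\frac{a+b}{b}$ in \eqref{endlemma2} is exactly this rescaling, and the remaining terms (boundary term plus the $\Gamma_2$ term) are handled by pushing $T_{as}$ through and using positivity of $T_{as}$ together with the monotonicity $\Gamma(T_vf)$ nonincreasing in $v$. I expect the main obstacle to be the careful accounting of which terms cancel when differentiating nested semigroup expressions and signs of the leftover $\Gamma_2$ integrands — this is where the hypothesis $\Gamma_2 \geq 0$ and the specific shifts $y \pm \frac s2$ in the definition of $S_s$ are used in an essential way — rather than any deep new idea; the passage from the square-function-squared inequalities to the square-function inequalities (the $\sqrt{\ }$ step) is the only other delicate point, and only in the noncommutative generalization.
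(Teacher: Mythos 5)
Your proof of (\ref{endlemma1}) is exactly the paper's. For (\ref{endlemma2}) and (\ref{endlemma}), however, your differentiation-based bookkeeping has two genuine gaps. First, the sign of your leftover ``$\Gamma_2$-type'' term rests on the claim that $v\mapsto\Gamma(T_vf)$ is pointwise nonincreasing, via $\frac{d}{dv}\Gamma(T_vf)=-2\Gamma_2(T_vf)$. That identity is wrong: from (\ref{Gamma}) one gets $\frac{d}{dv}\Gamma(T_vf)=\Gamma(LT_vf,T_vf)+\Gamma(T_vf,LT_vf)=L\Gamma(T_vf)-\Gamma_2(T_vf)$, and the term $L\Gamma(T_vf)$ has no sign. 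The pointwise monotonicity is in fact false already for the heat semigroup on ${\Bbb R}$, where $\Gamma(T_vf)=|\partial_xT_vf|^2$ can vanish at a point for $v=0$ and be positive for $v>0$. What $\Gamma_2\geq0$ gives is only the smeared inequality (\ref{Gamma_2}), $\Gamma(T_vf)\leq T_v\Gamma(f)$; to close a derivative argument one must either integrate by parts in $y$ to absorb the $L\Gamma$ contribution, or avoid differentiating under the integral altogether.

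Second, you differentiate $T_{cs}(S_s^2)$, but the lemma concerns $T_{cs}(S_s)$, and since the semigroup sits \emph{outside} the square root, $T_{cs}(S_s)\neq(T_{cs}(S_s^2))^{1/2}$; monotonicity of the latter does not transfer to the former by a chain rule or by operator monotonicity of $\sqrt{\cdot}$, even in the commutative case. Concretely, $\frac{d}{ds}T_{\frac s2}(S_s)=\frac12LT_{\frac s2}(S_s)+T_{\frac s2}(\frac{dS_s}{ds})$: the second term is $\leq0$ because $S_s$ decreases in $s$ and $T$ is positivity preserving, but $\frac12LT_{\frac s2}(S_s)$ has no sign, so your reduction does not close. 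The paper is structured precisely to sidestep both problems: it first shows $S_s\geq S_t$ for $s\leq t$ and then runs a finite-difference argument on $S_s$ itself, using only positivity of $T$ for (\ref{endlemma2}), and for (\ref{endlemma}) the Kadison--Schwarz inequality (\ref{cp}) in the form $T_{\Delta s}(A^{\frac12})\leq(T_{\Delta s}A)^{\frac12}$ to push the extra semigroup increment \emph{inside} the square root, followed by (\ref{Gamma_2}) and a change of variables to get $T_{\Delta s}S_{s+2\Delta s}\leq S_s$. You would need to import that Kadison--Schwarz step (or an equivalent device) to repair the argument; as written the proposal does not establish (\ref{endlemma2}) or (\ref{endlemma}).
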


\textbf{Proof}. (\ref{endlemma1}) is true because of the fact
\begin{eqnarray*}
\Gamma (T_{2y}f) \leq T_{y-\frac s2}\Gamma(
T_{y+\frac s2}f),
\end{eqnarray*}
which follows from the $\Gamma_2\geq0$ condition (\ref{Gamma_2}).

Apply (\ref{Gamma_2}) again, we get $S_{s}\geq S_{t}$ for any $s\leq t$, then
\begin{eqnarray*}
&&T_{(a+b)s+(a+b)\Delta s}(S_{s+\Delta s})-T_{(a+b)s}(S_{s})\\ &=&T_{{a s}}[T_{bs+(a+b)\Delta s}(S_{s+\Delta s})-T_{bs}(S_{s})] \\
&\geq&T_{a s}[T_{bs+(a+b)\Delta s}(S_{s+\frac{a+b}b\Delta s})-T_{bs}(S_{s})].
\end{eqnarray*}
Divide by $\Delta s$ both sides, we get (\ref
{endlemma2}).

We go to prove (\ref{endlemma}).
By (\ref{cp}) and $\Gamma_2\geq0$ condition (\ref{Gamma_2}), we get
\begin{eqnarray*}
T_{\Delta s}S_{s+2\Delta s}
&=&T_{\Delta s}(\int_{s+2\Delta s}^\infty T_{y-\frac s2-\Delta
s}\Gamma(T_{y+\frac s2+\Delta s}f)dy)^{\frac
12} \\
&\leq&(\int_{s+2\Delta s}^\infty T_{y-\frac s2}\Gamma(T_{y+\frac s2+\Delta s}f)dy)^{\frac 12}\\
&\leq&(\int_{s+2\Delta s}^\infty T_{y-\frac
s2+\frac{\Delta s}2}\Gamma(T_{y+\frac s2+\frac{\Delta s}2}f)dy)^{\frac 12}\\
&=&(\int_{s+2\Delta s+\frac{\Delta s}2}^\infty T_{u-\frac
s2}\Gamma(T_{u+\frac s2}f)du)^{\frac 12} \\
&\leq&(\int_s^\infty T_{y-\frac s2}\Gamma( T_{y+\frac
s2}f)dy)^{\frac 12}=S_{s}.
\end{eqnarray*}
Then
\begin{eqnarray*}
T_{\frac{s+2\Delta s}2}S_{s+2\Delta s}-T_{\frac s2}S_s\leq 0.
\end{eqnarray*}
Taking $\Delta s\rightarrow 0$ we obtain (\ref
{endlemma}).

\begin{lemma}
\label{fphis} We have
\begin{eqnarray*}
&&|\tau \int_0^\infty \Gamma(T_{2s}f,T_{(2+v)s}\varphi_s) ds|\\&\leq& (8+4v)^{\frac12}\|G_\Gamma(f)\|_1^{\frac 12} (-\tau \int_0^\infty (T_{(\frac32+v)y}\int_0^y\Gamma(\varphi_s)ds) \frac{ \partial
T_{\frac y2}(S_{y})}{\partial y} dy)^{\frac 12}
\end{eqnarray*}
for any $v>0$ and $f\in \A$ and any  $\varphi _s\in\A$.
\end{lemma}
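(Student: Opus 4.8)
The plan is to prove Lemma~1.2 by isolating the $s$-integral of the bilinear $\Gamma$-form as an inner product in an auxiliary Hilbert-module structure, then applying Cauchy--Schwarz there, and finally using the monotonicity/derivative identities of Lemma~1.1 to recognize the second factor. First I would rewrite
$$\tau\int_0^\infty \Gamma(T_{2s}f, T_{(2+v)s}\varphi_s)\,ds$$
by using the defining relation $2\Gamma(g,h)=L(g^*h)-(Lg^*)h-g^*(Lh)$ together with the fact that $\tau\circ L=0$ on a suitable dense class, so that $\tau\,\Gamma(g,h)=-\tau((Lg^*)h)=-\tau(g^*Lh)$ under the trace; this turns the pointwise gradient form into an honest pairing that I can integrate by parts in $s$. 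The point of the shifts ``$2s$'' and ``$(2+v)s$'' is precisely that $\frac{\partial}{\partial s}T_{2s}f = 2LT_{2s}f$ and $\frac{\partial}{\partial s}T_{(2+v)s}\varphi_s$ picks up both an $L$-term and a $\partial_s\varphi_s$-term, and combining these with Lemma~1.3 (Meyer's identity) lets me convert the $\Gamma$ of a product into a genuine ``$T_{u}\Gamma(\cdot)$'' that is dominated, via $\Gamma_2\ge 0$, by the truncated square function $S_s$.

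The core step is a Cauchy--Schwarz estimate of the form
$$\Bigl|\tau\int_0^\infty \Gamma(T_{2s}f, T_{(2+v)s}\varphi_s)\,ds\Bigr|
\le \Bigl(\tau\int_0^\infty w_1(s)\,\Gamma(T_{2s}f)\,ds\Bigr)^{1/2}
\Bigl(\tau\int_0^\infty w_2(s)\,\Gamma(T_{(2+v)s}\varphi_s)\,ds\Bigr)^{1/2},$$
with explicit weights $w_1,w_2$ whose product is the constant $(8+4v)$ coming from comparing the shifted heat times (this is where the $\frac{8t}{y}$-type bounds in (1.14) and the scaling $S_{s+\Delta s}$ vs $S_s$ get used). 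The first factor I would bound directly by $\int_0^\infty \Gamma(T_{2s}f)\,ds \le \tfrac12 G_\Gamma(f)^2$ after a change of variables, giving the $\|G_\Gamma(f)\|_1^{1/2}$ term once I take $\tau$ and use that $\Gamma(T_{2s}f)\ge 0$. For the second factor I would integrate by parts in $s$: writing $\int_0^\infty w_2(s)\,\Gamma(T_{(2+v)s}\varphi_s)\,ds$ and using $\Gamma(T_{(2+v)s}\varphi_s)\le T_{(2+v)s - \text{something}}\Gamma(\varphi_s)$ together with $\Gamma(\varphi_s) = \partial_s\int_0^s\Gamma(\varphi_u)\,du$, I can transfer the $s$-derivative onto the remaining semigroup/square-function factor, which is exactly $\partial_y\bigl(T_{y/2}S_y\bigr)$; the sign in Lemma~1.1~(1.11) guarantees $\partial_y(T_{y/2}S_y)\le 0$, so the resulting expression $-\tau\int_0^\infty \bigl(T_{(3/2+v)y}\int_0^y\Gamma(\varphi_s)\,ds\bigr)\,\partial_y(T_{y/2}S_y)\,dy$ is nonnegative and its square root is legitimate.

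I expect the main obstacle to be bookkeeping of the time-shifts so that all the semigroup exponents line up: one must arrange that after Cauchy--Schwarz and the $\Gamma_2\ge 0$ domination, the ``test-function side'' collapses to $T_{(3/2+v)y}\int_0^y \Gamma(\varphi_s)\,ds$ against $\partial_y(T_{y/2}S_y)$ with no leftover factors, and this forces the particular choice of $(2+v)$ in the hypothesis and the $(3/2+v)$ in the conclusion. A secondary technical point is justifying the integration by parts and the interchange of $\tau$ with the $s$-integral: boundary terms at $s=0$ and $s=\infty$ must vanish, which follows from $\varphi_s, f\in\A$ and the contractivity and $L^2$-continuity of $(T_t)$, and the Fubini step is legitimate because $\Gamma\ge 0$ makes the integrand positive before the final Cauchy--Schwarz split. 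Once the exponents are matched, the constant $(8+4v)$ is read off from (1.14) and the elementary estimate $w_1(s)w_2(s)\le 8+4v$, and Lemma~1.1 supplies the rest.
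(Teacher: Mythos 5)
Your outline has the right global shape (Cauchy--Schwarz, then $\Gamma_2\ge 0$ domination, then the derivative comparison from Lemma \ref{endlem1.1}), but it misses the one idea that makes the lemma work: in the paper the Cauchy--Schwarz step is performed with the \emph{function-valued} weights $G_s^{-1}$ and $G_s$, where $G_s=(\int_s^\infty\Gamma(T_{2y}f)\,dy)^{1/2}$ is the truncated $G$-function, not with scalar weights $w_1(s),w_2(s)$ whose product is a constant. This matters in both factors. For the first factor, the paper computes
\begin{eqnarray*}
I=\tau\int_0^\infty \Gamma(T_{2s}f)\,G_s^{-1}\,ds=\tau\int_0^\infty\Bigl(-\frac{\partial G_s^2}{\partial s}\Bigr)G_s^{-1}\,ds=2\,\tau\int_0^\infty\Bigl(-\frac{\partial G_s}{\partial s}\Bigr)ds=2\|G_0\|_1,
\end{eqnarray*}
which is how the \emph{$L_1$-norm} $\|G_\Gamma(f)\|_1$ appears to the power $\tfrac12$. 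Your version, bounding $\int_0^\infty\Gamma(T_{2s}f)\,ds$ by $\tfrac12 G_\Gamma(f)^2$ and then applying $\tau$, yields $\|G_\Gamma(f)\|_{L_2}^2$, which is not the quantity in the statement; no constant-product choice of scalar weights can convert the quadratic expression into the linear one. For the second factor, the truncated square function $S_y$ enters the conclusion \emph{only} because the weight $G_s$ sits inside the second Cauchy--Schwarz term and is then dominated by $S_s$ via (\ref{endlemma1}); with a scalar weight $w_2(s)$ there is simply no square-function factor present onto which you could ``transfer the $s$-derivative,'' so the expression $-\tau\int_0^\infty(T_{(\frac32+v)y}\int_0^y\Gamma(\varphi_s)ds)\,\partial_y(T_{\frac y2}S_y)\,dy$ cannot arise from your setup. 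The remaining steps you describe for the second factor (self-adjointness of $T$, writing $T_{(2+v)s}S_s=\int_s^\infty-\partial_y(T_{(2+v)y}S_y)\,dy$, Fubini, and then inequality (\ref{endlemma2}) with $a=\frac32+v$, $b=\frac12$, which produces the factor $4+2v$) are consistent with the paper once the weight $G_s\le S_s$ is in place, and the constant $(8+4v)^{1/2}=(2\cdot(4+2v))^{1/2}$ is the product of the two contributions; it does not come from the subordination estimate (1.14), which plays no role here. Also, your opening reduction via $\tau\circ L=0$ and integration by parts in $s$ belongs to the proof of Theorem 0.1 (where the pairing $\tau(f\varphi^*)$ is converted into the $\Gamma$-integral), not to this lemma, whose starting point is already the integrated bilinear form.
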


\textbf{Proof.} We can assume $G_s,S_{v,s}$ are invertible by approximation. By (\ref{cp}%
), (\ref{endlem1.1}) and Cauchy-Schwarz inequality, we get
\begin{eqnarray*}&&|\tau \int_0^\infty \Gamma(T_{2s}f, T_{(2+v)s}\varphi_s)ds|\\ &\leq&(\tau\int_0^\infty \Gamma(T_{2s}f){G}_s^{-1}ds )^{\frac 12} (\tau\int_0^\infty \Gamma( T_{(2+v)s}\varphi_s){G_s} ds)^{\frac 12}\\
&\stackrel{def}{=}&I^{\frac 12}II^{\frac 12}.
\end{eqnarray*}

Note that $-\frac{\partial {G}_s^2}{\partial s}=\Gamma(T_{2s}f).$ For $I,$ we have
\begin{eqnarray*}
I =\tau \int_0^\infty -\frac{\partial {G}_s^2}{\partial s}{G}_s^{-1} ds =2{
\tau}\int_0^\infty -\frac{\partial {G}_s}{\partial s}ds =2\|G_0\|_1.
\end{eqnarray*}
We estimate $II$.   By (\ref{endlemma1})and $\Gamma_2\geq0$ we have
\begin{eqnarray}
II&\leq&{\tau}\int_0^\infty \Gamma(T_{(2+v)s}\varphi_s)S_sds\\
&\leq&{\tau}\int_0^\infty \Gamma(\varphi_s)T_{(2+v)s}S_sds
  \nonumber \\
&=&{\tau}\int_0^\infty \Gamma(\varphi_s)\int_s^\infty-\frac{\partial T_{(2+v)y}(S_y)%
}{\partial y}dyds  \nonumber \\
&=&-\tau\int_0^\infty \int_0^y\Gamma(\varphi_s)ds\frac{\partial T_{(2+v)y}(S_y)}{%
\partial y}dy.  \label{II}
\end{eqnarray}
Applying (\ref{endlemma}), with $a,b=\frac32+v,\frac12$ to (\ref{II}), we get
\begin{eqnarray*}
II&\leq&-(4+2v){\tau}\int_0^\infty \int_0^y\Gamma(\varphi_s)dsT_{(\frac32+v)y}(\frac{
\partial T_{\frac y2}(S_y)}{\partial y})dy \\
&=&-(4+2v){\tau}\int_0^\infty (T_{(\frac32+v)y}\int_0^y\Gamma(\varphi_s)ds)\frac{ \partial
T_{\frac{y}2}(S_y)}{\partial y}dy.
\end{eqnarray*}
Combining the estimates of I and II, we get the desired inequality.

\medskip \textbf{Proof of Theorem 0.1.} Note $\tau Lf=0, \tau (Lf)g=\tau f(Lg)$ for $f,g\in \A$. By the definition of $\Gamma$ (\ref{Gamma}),
\begin{eqnarray*}
\tau f\varphi^*=-\tau\int_0^\infty \frac {\partial}{\partial s} (T_{2s}fT_{(3+v)s}\varphi^*)ds
=(5+v)\tau\int_0^\infty \Gamma( T_{2s}f,T_{(3+v)s}\varphi)ds
\end{eqnarray*}

Applying Lemma \ref{fphis} to $\varphi_s=T_s\varphi$, we get
\begin{eqnarray*}
|\tau f\varphi^*|\leq  12\sqrt3 \|G_\Gamma(f)\|_1^{\frac 12} (-\tau \int_0^\infty \int_0^yT_{(\frac32+v)y}\Gamma(T_s\varphi)ds \frac{ \partial
T_{\frac y2}(S_{y})}{\partial y} dy)^{\frac 12},
\end{eqnarray*}
for any $0<v<1$.

Denote $M_y=\frac1y\int_0^y T_sds$. Integrate on $v$ for $0<v<1$, we have
\begin{eqnarray*}
|\tau f\varphi^*|^2\leq  c \|G_\Gamma(f)\|_1 (-\tau \int_0^\infty \int_0^yM_yT_{(\frac32)y}\Gamma(T_s\varphi)ds \frac{ \partial
T_{\frac y2}(S_{y})}{\partial y} dy),
\end{eqnarray*}
Then, by (\ref{endlemma}), we have
\begin{eqnarray*}
|\tau f\varphi^*|^2&\leq& c \|G_\Gamma(f)\|_1 (\sup_y\|\int_0^yM_yT_{(\frac32)y}\Gamma(T_s\varphi)ds\|_\infty) \int_0^\infty-\frac{ \partial
T_{\frac y2}(S_{y})}{\partial y} dy\\
&\leq&  c \|G_\Gamma(f)\|_1 (\sup_y\|\int_0^yM_yT_{(\frac32)y}\Gamma(T_s\varphi)ds\|_\infty) \|S_{\Gamma}(f))\|_{1}.
\end{eqnarray*}

On the other hand, note $M_yT_{\frac y2+s}\leq 3M_{3y}$ for $0<s<y$, we have
\begin{eqnarray*}
\int_0^yM_yT_{(\frac32)y}\Gamma(T_s\varphi,T_s\varphi)ds&=&\int_0^yM_yT_{\frac y2+s}T_{y-s}\Gamma(T_s\varphi)ds\\
&\leq& 3M_{3y}\int_0^yT_{y-s}\Gamma(T_s\varphi)ds\\
(\rm Lemma \ \ref{lemma})&=&3M_{3y}(T_y|\varphi|^2-|T_y \varphi|^2).
\end{eqnarray*}
Therefore,
$$\sup_y\|\int_0^yM_yT_{(\frac32)y}\Gamma(T_s\varphi)ds\|_\infty\leq 3\|\varphi\|_{{\rm bmo}({\cal T})}^2,$$
because $M_{3y}$ is a bounded operator on $L_\infty$.

We concluded that
\begin{eqnarray*}
|\tau (f\varphi^*)|^2\leq  c \|G_\Gamma(f)\|_1\|S_\Gamma(f)\|_1\|\varphi\|_{\rm bmo ({\cal T})}^2.
\end{eqnarray*}
\qed

\begin{remark}
Known examples of semigroups operators satisfying the $\Gamma_2\geq0$ conditions include  all standard semigroups of operators on group von Neumann algebras (see Example 3), the Ornstein-Uhlenbeck semigroups on ${\Bbb R}^n$, and the heat semigroups generated by the Laplace-Beltrami operator on a compact manifold with positive curvature.
 \end{remark}

\begin{remark}
The proofs of Lemma \ref{endlem1.1} and \ref{fphis} work for any bilinear form $B(\cdot,\cdot)$ satisfying (i) $B(f,f)\geq0$; (ii) $B(T_sf,T_sf)\leq T_sB(f,f)$. In particular, the proofs work for $B(f_s,g_s)=s\frac{\partial f^*_s}{\partial s}\frac{\partial g_s}{\partial s}$ and will give the estimation
\begin{eqnarray*}
&&|\tau fg^*|\\
&\leq& c \|(\int_0^\infty |\frac{\partial T_sf}{\partial s}|^2sds)^\frac12\|^\frac12_{L_1(M)} \|(\int_0^\infty T_s|\frac{\partial T_sf}{\partial s}|^2sds)^\frac12\|^\frac12_{L_1(M)}\sup_t\|T_t\int_0^t|\frac{\partial T_sg}{\partial s}|^2sds\|_{L_\infty(M)}^\frac12.
\end{eqnarray*}
The author shows in [M08] that,
\begin{eqnarray}\label{tent}
\sup_t\|T_t\int_0^t|\frac{\partial T_sg}{\partial s}|^2sds\|_{L_\infty(M)}^\frac12\leq c\|g\|_{bmo(\T)},
\end{eqnarray}
if $T_s$ is a subordinated Poisson semigroup. It would be nice if there is a less strict assumption on $T_t$ which implies (\ref{tent}).
\end{remark}

\medskip \textbf{Proof of Theorem 0.2.} Note by (\ref{JNSS}),
\begin{eqnarray}
\|\varphi\|_{BMO(\T)}=\sup_t\|T_t|\varphi-T_t\varphi|\|_\infty=\sup_f \tau (\varphi f),
\end{eqnarray}
Here the supremum takes for all $f=hT_t(g)-T_t(hT_tg)$ with $g\geq0, \|g\|_1,\|h\|_\infty\leq1$.
We need to show that such $f$'s are in $H_1^S(\T)$ with norm $\leq c$.
The inclusion $(H^S_1)^*\subset BMO(\T)$ then follows from a density  argument. The equivalence $(H_1^S)^*=bmo(\T)=BMO(\T)$ and $H_1^G(\T)=H_1^S(\T)$ follow from inequality (\ref{HGS}) and Lemma \ref{JM2}.

Fix such a $f$.
Recall $M_t=\frac1t\int_0^tT_sds$.
\begin{eqnarray}
\tau(\int_0^t T_s\Gamma(T_sf)ds)^\frac12&\leq& \tau(\int_0^t M_tT_tT_s\Gamma(T_sf)ds)^\frac12\nonumber
\\
&\leq& \tau(\int_0^t 3M_{3t}T_{t-s}\Gamma(T_sf)ds)^\frac12\nonumber \\
(\rm Lemma \ \ref{lemma})&=& \tau(3M_{3t}T_{t}|f|^2-3M_{3t}|T_tf|^2)^\frac12 \nonumber\\
&\leq& \tau(3M_{3t}T_{t}|f|^2)^\frac12\label{0t}\\
&\leq& 2\sqrt3\tau(M_{3t}T_{2t}|T_tg|^2)^\frac12\nonumber\\
({\rm  apply\ assumption }\ (ii))&\leq& 4\sqrt2\tau(M_{8t}|T_tg|^2)^\frac12\leq c\nonumber.
\end{eqnarray}

For the other part,
\begin{eqnarray*}
\tau(\int_t^\infty T_s\Gamma(T_sf)ds)^\frac12&=&\tau(\sum_{n=0}^\infty \int_{2^nt}^{2^{n+1}t}T_s\Gamma(T_sf)ds)^\frac12\\
&\leq&\sum_{n=0}^\infty \tau( \int_{2^nt}^{2^{n+1}t}T_s\Gamma(T_sf)ds)^\frac12\\
({\rm let\ } v=s-2^nt)  &\leq&\sum_{n=0}^\infty \tau(\int_0^{2^nt}T_{2^nt+v}\Gamma(T_vT_{2^nt}f)dv)^\frac12\\
 ({\rm apply}\ (\ref{0t}))&\leq& \sum_{n=0}^\infty \tau(3M_{3\cdot2^nt}T_{2^nt}|T_{2^{n-1}t}T_{2^{n-1}t}f|^2)^\frac12\\
 &\leq& \sum_{n=0}^\infty \tau(4M_{8\cdot2^{n-1}t}|T_{2^{n-1}t}T_{2^{n-1}t}f|^2)^\frac12\\
   ({\rm apply\ assumption }\ (ii))&\leq& c\sum_{n=0}^\infty  \|T_{2^{n-1}t}f\|_1.
\end{eqnarray*}

Note by assumption (i) we have
\begin{eqnarray*}\|T_{2^{n-1}t}f\|_1=\|T_{2^{n-1}t}(T_t(hT_tg)-hT_tg)\|_1\leq \frac c{2^{rn}}.
\end{eqnarray*}

Therefore, \begin{eqnarray*}
\tau(\int_0^\infty T_s\Gamma(T_sf)ds)^\frac12&\leq& c+\sum_{n=0}^\infty\frac c{2^{rn}}\leq c. \end{eqnarray*}\qed

\medskip
\begin{example} Let us illustrate the assumptions of Theorem \ref{end} for the heat semigroups $(T_t)_t$ on a weighted Riemannian manifold with a doubling measure.  Saloff-Coste's survey (see [SC10]) gives a lot of examples of $T_t$ with kernels satisfying the following upper Gaussian bounds
\begin{eqnarray}
p(t,x,y)\leq \frac 1{V(x,\sqrt t)}\exp{-\frac {d^2(x,y)}{ct}},\\
|\frac{\partial p(t,x,y)}{\partial t}|\leq \frac 1{tV(x,\sqrt t)}\exp{-\frac {d^2(x,y)}{ct}} \label{partialp}.
\end{eqnarray}
The assumptions (i), (ii) are easily verified for such $T_t$'s. (i) is obvious by (\ref{partialp}). For (ii), it is enough to check  the extreme points $f=\delta_{x_0}$. Then $T_tf=p(t,x_0,y)$ and
\begin{eqnarray*}
(\frac1t\int_{c_1t}^{c_2t}T_sds|T_tf|^2)\leq \frac 1{V^2(x,\sqrt t)}\exp{-\frac {d^2(x,y)}{ct}}
\end{eqnarray*}
which belongs to $L^\frac12$ uniformly in $t$.
\end{example}

\begin{example} Let $(O_t)$ be the Ornstein-Uhlenbeck semigroups on ${\Bbb R}^n$ with the gaussian measure $d\mu=e^{-x^2}dx$. The infinitesimal generator of $(O_t)$ is $L=\frac12\triangle-x\cdot {\partial x}=\sum_{i=1}^n \frac12\frac{\partial^2} {\partial^2 x_i}-x_i\cdot \frac{\partial} {\partial x_i}$.  $O_t$ satisfies the $\Gamma_2\geq0$ condition and the assumption (i) of Theorem \ref{end}. This can be verified by the kernel of $O_t$.    So Theorem 0.1 applies to $O_t$. It is a pity that $O_t$ does not satisfy the assumption (ii) of Theorem \ref{end} although  the following inequality holds
\begin{eqnarray}
\int_{{\Bbb R}^n}(\frac1t\int_{t}^{2t}\tilde O_sds|O_tf|^2)^\frac12d\mu\leq c\int_{{\Bbb R}^n}fd\mu,
\end{eqnarray}
for any $f\geq0$.
Here $\tilde O_t$ is the semigroup generated by $\tilde L=  \frac12\triangle-2x\cdot {\partial x}$.
\end{example}

\section{Extension to the noncommutative setting}

\setcounter{theorem}{0} \setcounter{equation}{0}
 We refer the readers to [PX03] for an introduction of noncommuative $L^p$ spaces and to [JLX06] Chapter 10 for noncommutative semigroups of operators.
Given a semigroups of operators on a semifinite von Neumann algebras $M$, all the definitions and Lemmas in Section 1 still work in the noncommutative setting except Lemma \ref{JN}. The noncommutative generalization of Theorem 0.1 and its proof are straightforward (we kept the noncommutative version in mind when writing the proof of Theorem 0.1). Let us state it as follows without a proof.

 \begin{theorem}\label{0.1non}
 Let $(T_s)_s$ be a semigroups of operators on a semifinite von Neumann algebar $(M,\tau)$ satisfying $\Gamma_2\geq0$. Then bmo$(\T)\subset (H_1^S(\T))^*$ and
\begin{eqnarray*}
|\tau ({f}g^*)|\leq c_1\|f\|^{\frac12}_{H_1^S(\T)}\|f\|^{\frac12}_{H_1^G(\T)}\|g\|_{bmo(\T)}\leq c_2\|f\|_{H_1^S(\T)}\|g\|_{bmo(\T)},
\end{eqnarray*}
for all $f,g\in \A$ with absolute constants $c_1,c_2$.\end{theorem}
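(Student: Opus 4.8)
\medskip\noindent\textbf{Proof (sketch).} The plan is to transcribe the proof of Theorem 0.1, together with Lemmas \ref{lemma}, \ref{endlem1.1} and \ref{fphis}, into the von Neumann algebra $(M,\tau)$ essentially word for word, the only structural change being a fixed ``dictionary'': every pointwise inequality $A\le B$ that the scalar argument integrates against a positive element $X$ is to be read as $\tau(AX)=\tau(X^{1/2}AX^{1/2})\le\tau(X^{1/2}BX^{1/2})=\tau(BX)$; every estimate of an $L_\infty$-norm is obtained from ``$0\le X\le Y\Rightarrow\|X\|_\infty\le\|Y\|_\infty$'' together with the fact that the $T_t$ and $M_t$ are unital and positivity preserving (so $\|M_tX\|_\infty\le\|X\|_\infty$ for $X\ge0$); every use of $|T_tf|^2\le T_t|f|^2$ is the Kadison--Schwarz inequality (\ref{cp}) for the unital completely positive contraction $T_t$ (complete positivity being automatic in the commutative case and part of the standing assumptions in the noncommutative one); and every use of $T_t(h^{1/2})\le(T_th)^{1/2}$, $h\ge0$, is (\ref{cp}) combined with operator concavity of $t\mapsto t^{1/2}$. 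With this dictionary, Lemma \ref{lemma} is unchanged (its proof only uses the semigroup law, $|g|^2=g^*g$, $\tau(La)=0$ and $\tau((La)b)=\tau(a(Lb))$, all valid since $T_t=T_t^*$ and $T_t1=1$), and Lemma \ref{endlem1.1}, whose three inequalities are built only from $\Gamma(T_v\psi)\le T_v\Gamma(\psi)$ (equivalent to $\Gamma_2\ge0$) and from (\ref{cp}), carries over verbatim.

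The one place that genuinely requires a noncommutative input is the Cauchy--Schwarz step in the proof of Lemma \ref{fphis}, where $\tau\int_0^\infty\Gamma(T_{2s}f,T_{(2+v)s}\varphi_s)\,ds$ is split against the weights $G_s^{-1}$ and $G_s$. What is needed is a weighted operator Cauchy--Schwarz for the $M$-valued sesquilinear form $\Gamma$,
\[
|\tau(\Gamma(a,b))|\ \le\ \tau\bigl(\Gamma(a,a)\,w\bigr)^{\frac12}\,\tau\bigl(\Gamma(b,b)\,w^{-1}\bigr)^{\frac12},
\]
for positive invertible $w\in M$ and $a,b\in\A$. I would derive it from the complete positivity of $\Gamma$: since $\Gamma(a,b)=\lim_{t\to0}\frac1{2t}\bigl(T_t(a^*b)-(T_ta)^*(T_tb)\bigr)$, applying Kadison--Schwarz to the amplification $T_t\otimes\mathrm{id}_2$ shows that $\left(\begin{array}{cc}\Gamma(a,a)&\Gamma(a,b)\\ \Gamma(b,a)&\Gamma(b,b)\end{array}\right)\ge0$ in $M_2(M)$; conjugating by $\mathrm{diag}(w^{1/2},w^{-1/2})$ preserves positivity, and for any positive matrix $\left(\begin{array}{cc}A&C\\ C^*&B\end{array}\right)$ one has $|\tau(C)|\le\tau(A)^{1/2}\tau(B)^{1/2}$ (write $C=A^{1/2}DB^{1/2}$ with $\|D\|\le1$ and use the trace Cauchy--Schwarz $|\tau(XY)|\le\tau(XX^*)^{1/2}\tau(Y^*Y)^{1/2}$); since $\tau(w^{1/2}\Gamma(a,b)w^{-1/2})=\tau(\Gamma(a,b))$ by traciality, this is exactly the claim. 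The invertibility of $w=G_s,S_s$ is arranged by passing to $w+\varepsilon$ and letting $\varepsilon\to0$, as in the scalar case.

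Granting this, the rest is bookkeeping: the identity $\tau\bigl((-\partial_s G_s^2)G_s^{-1}\bigr)=2\tau(-\partial_s G_s)$ (hence $I=c\|G_\Gamma(f)\|_1$) is again traciality; the estimate of $II$ uses only Lemma \ref{endlem1.1}, $\Gamma(T_v\psi)\le T_v\Gamma(\psi)$, $T_v=T_v^*$ and Fubini; and the final assembly in the proof of Theorem 0.1 uses $M_yT_{y/2+s}\le3M_{3y}$ for $0<s<y$ (a positivity-preserving inequality between the maps), Meyer's Lemma \ref{lemma}, and $\bigl\|M_{3y}\bigl(T_y|\varphi|^2-|T_y\varphi|^2\bigr)\bigr\|_\infty\le\|\varphi\|_{\mathrm{bmo}(\T)}^2$, in which $T_y|\varphi|^2-|T_y\varphi|^2\ge0$ is precisely (\ref{cp}). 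This gives $|\tau(fg^*)|^2\le c\,\|G_\Gamma(f)\|_1\|S_\Gamma(f)\|_1\|g\|_{\mathrm{bmo}(\T)}^2$, i.e.\ the first inequality; the second follows from $\|f\|_{H_1^G(\T)}\le c\|f\|_{H_1^S(\T)}$, itself the operator inequality $G_\Gamma(f)^2\le2\,S_\Gamma(f)^2$ (from $\Gamma(T_sf)\le T_{s/2}\Gamma(T_{s/2}f)$) plus operator monotonicity of the square root. Note that Lemma \ref{JN} --- the one result flagged as failing in the noncommutative setting --- is never invoked in the proof of Theorem 0.1, so nothing is lost. The main obstacle is not depth but care: proving the weighted operator Cauchy--Schwarz for $\Gamma$, and, throughout the $\tau$-computations, keeping the adjoints ($a$ versus $a^*$, $g$ versus $g^*$) and the order of non-commuting factors straight, since on a commutative algebra all of this was invisible.
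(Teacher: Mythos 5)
Your proposal is correct and follows exactly the route the paper intends: the paper states Theorem \ref{0.1non} without proof, remarking only that the argument for Theorem 0.1 was written with the noncommutative case in mind, and your transcription is that argument. You have in fact supplied the one detail the paper leaves implicit --- the weighted Cauchy--Schwarz step in Lemma \ref{fphis} via the positivity of the $2\times2$ matrix $\bigl(\Gamma(a_i,a_j)\bigr)_{i,j}$ coming from complete positivity of $T_t$ --- and your derivation of it, together with the traciality bookkeeping for $I$ and $II$, is sound.
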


 The generalization of Theorem \ref{end} takes a little more effort. Because the John-Nirenberg inequality for noncommutative martingales are not very ``nice", and the noncommutative version of  Lemma \ref{JN} is not available to us (at least by now).

\begin{theorem}
\label{0.2non} Let $(T_s)_s$ be as in theorem \ref{0.1non}. Assume that, in addition, there exist constants $c_3,c_4,r>0$ such that,

(i)  $\|(T_{t+\varepsilon t}-T_{t})f\|_1\leq c_3 \varepsilon^r\|f\|_1,$ for all $\varepsilon>0, t>0$ and $f\in L_1(M)$.

(ii) For all $t>0$ and $g\in L_1^+(M),h\in L_2(M)$,
\begin{eqnarray}\label{Lhalf2}
\tau[(M_{8t}|h(T_tg)^{\frac12}|^2)^{\frac12}]\leq c_3(\tau g)^{\frac12}(\tau |h|^2)^{\frac12}.
\end{eqnarray}
 Then
$(H_1^S)^*=bmo(\T)=BMO(\T)$ and
$H_1^G=H_1^S$.
\end{theorem}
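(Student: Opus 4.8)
The plan is to run the proof of Theorem~\ref{end} in the semifinite von Neumann algebra setting. That proof is written so as to survive the passage to noncommutativity almost verbatim; the one ingredient that does not survive is the use of the John--Nirenberg inequality (Lemma~\ref{JN}), whose only role there is to lower, in the definition of $BMO(\T)$, the exponent $2$ to $1$. I would keep the exponent $2$ and replace that step by a direct unfolding of the square. Namely, for $\varphi\in L_\infty(M)$,
\[
\|\varphi\|_{BMO(\T)}^{2}=\sup_{t>0}\big\|\,T_t|\varphi-T_t\varphi|^{2}\big\|_\infty
=\sup_{t>0,\ g}\big\|(\varphi-T_t\varphi)(T_tg)^{\frac12}\big\|_{2}^{2},
\]
using $\|x\|_\infty=\sup\{\tau(gx):g\in L_1^+(M),\ \tau(g)\le1\}$ and $\tau\big(g\,T_t|\varphi-T_t\varphi|^{2}\big)=\tau\big(T_t(g)\,|\varphi-T_t\varphi|^{2}\big)=\|(\varphi-T_t\varphi)(T_tg)^{\frac12}\|_{2}^{2}$; then, dualizing the Hilbert--Schmidt norm and using $T_t^{*}=T_t$ with the trace property,
\[
\|\varphi\|_{BMO(\T)}=\sup\big\{\,|\tau(\varphi f_{t,g,h})|\ :\ t>0,\ g\in L_1^+(M),\ \tau(g)\le1,\ h\in L_2(M),\ \tau(|h|^{2})\le1\,\big\},
\]
where $f_{t,g,h}=(T_tg)^{\frac12}h-T_t\big((T_tg)^{\frac12}h\big)$ (a $*$-variant of this formula is more convenient for matching the column/row square; this is part of the bookkeeping). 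These $f_{t,g,h}$ take over the role of the functions $hT_tg-T_t(hT_tg)$ in the proof of Theorem~\ref{end}; note $\|f_{t,g,h}\|_{1}\le2\|(T_tg)^{\frac12}\|_{2}\|h\|_{2}\le2$, and the shape ``$g\in L_1^+$, $h\in L_2$'' is exactly the one appearing in assumption (ii)/(\ref{Lhalf2}).

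\emph{Step 1: the atoms lie in $H_1^S(\T)$.} The heart of the proof is $\|f_{t,g,h}\|_{H_1^S(\T)}\le c=c(c_3,c_4,r)$ for all such atoms. I would establish it exactly along the lines of the proof of Theorem~\ref{end}, splitting $\int_0^\infty=\int_0^t+\int_t^\infty$ in $\|f\|_{H_1^S(\T)}=\tau\big((\int_0^\infty T_s\Gamma(T_sf)\,ds)^{\frac12}\big)$ and using $(A+B)^{\frac12}\le A^{\frac12}+B^{\frac12}$ to separate the two pieces. For the $\int_0^t$ piece: first, since $\tau(\Phi(Y)^{\frac12})\ge\tau(Y^{\frac12})$ for every unital trace-preserving positive map $\Phi$ and every $Y\ge0$ (operator Jensen), one may insert $\Phi=M_tT_t$; then $M_tT_tT_s\le 3M_{3t}T_{t-s}$ for $0\le s\le t$ (because $[t+s,2t+s]\subset[t-s,4t-s]$ and the $T_u$ are positive); then Lemma~\ref{lemma} turns $\int_0^t T_{t-s}\Gamma(T_sf)\,ds$ into $\frac12(T_t|f|^{2}-|T_tf|^{2})\le\frac12 T_t|f|^{2}$, and the Kadison--Schwarz inequality (\ref{cp}) reduces $M_{3t}T_t|f_{t,g,h}|^{2}$ to a bounded multiple of $M_{8t}$ applied to the positive quadratic expression in $g,h$ occurring in (\ref{Lhalf2}); assumption (ii) then gives the bound $\le c$. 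For the $\int_t^\infty$ piece: the dyadic decomposition $\int_t^\infty=\sum_n\int_{2^{n}t}^{2^{n+1}t}$, the change of variables $v=s-2^{n}t$, and a second application of the $\int_0^{\,\cdot}$ estimate at scale $2^{n}t$ reduce the $n$-th term to $c\,\tau\big((M_{8\cdot2^{n-1}t}|T_{2^{n-1}t}f_{t,g,h}|^{2})^{\frac12}\big)$; bounding this (again by (\ref{Lhalf2})) by $c\,\|T_{2^{n-1}t}f_{t,g,h}\|_{1}$ and invoking assumption (i) with $s=2^{n-1}t$, $\varepsilon s=t$ gives $\|T_{2^{n-1}t}f_{t,g,h}\|_{1}=\|(T_{2^{n-1}t+t}-T_{2^{n-1}t})((T_tg)^{\frac12}h)\|_{1}\le c_3\,2^{r}2^{-rn}$, and $\sum_n 2^{-rn}<\infty$.

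\emph{Step 2: conclusions.} By Theorem~\ref{0.1non} together with $L_\infty(M)\subset bmo(\T)$ (with $\|g\|_{bmo(\T)}\le\|g\|_\infty$) and Kaplansky density, $H_1^S(\T)$ embeds continuously in $L_1(M)$; hence each $\ell\in(H_1^S(\T))^{*}$ extends to $L_1(M)$ and therefore equals $\tau(\varphi\,\cdot)$ for some $\varphi\in L_\infty(M)\subset BMO(\T)$, and Step~1 gives $\|\varphi\|_{BMO(\T)}=\sup_{t,g,h}|\ell(f_{t,g,h})|\le c\,\|\ell\|$. Thus $(H_1^S(\T))^{*}\hookrightarrow BMO(\T)$ with the stated representation. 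Combining this with $\|\varphi\|_{(H_1^S(\T))^{*}}\le c_2\|\varphi\|_{bmo(\T)}$ from Theorem~\ref{0.1non} and $\|\varphi\|_{bmo(\T)}\le c\,\|\varphi\|_{BMO(\T)}$ from Lemma~\ref{JM2},
\[
\|\varphi\|_{bmo(\T)}\ \lesssim\ \|\varphi\|_{BMO(\T)}\ \lesssim\ \|\varphi\|_{(H_1^S(\T))^{*}}\ \lesssim\ \|\varphi\|_{bmo(\T)},
\]
so $bmo(\T)=BMO(\T)=(H_1^S(\T))^{*}$ with equivalent norms. Finally $\|f\|_{H_1^G}\le2\|f\|_{H_1^S}$ is (\ref{HGS}); conversely, combining $(H_1^S(\T))^{*}=bmo(\T)$ with the sharper inequality of Theorem~\ref{0.1non}, $|\tau(fg^{*})|\le c_1\|f\|_{H_1^S}^{\frac12}\|f\|_{H_1^G}^{\frac12}\|g\|_{bmo(\T)}$ (applied with $g$ ranging over a norming family, after a routine approximation), gives $\|f\|_{H_1^S}\lesssim\|f\|_{H_1^S}^{\frac12}\|f\|_{H_1^G}^{\frac12}$, hence $H_1^G(\T)=H_1^S(\T)$.

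\emph{Where the difficulty lies.} The delicate point is Step~1's tail estimate $\int_t^\infty$: one must push the dyadic-scale argument of the commutative proof through while every inequality is an operator inequality on $M$ --- so each passage $x\mapsto T_ux$, each average $M_s$, and each square root has to be used monotonically, via operator Kadison--Schwarz and operator Jensen --- and while the atom $f_{t,g,h}$ carries the \emph{fixed} scale-$t$ weight $(T_tg)^{\frac12}$, which does not absorb into the large running scales $2^{n}t$ the way a bounded weight would. Making $\tau\big((M_{8\cdot2^{n-1}t}|T_{2^{n-1}t}f_{t,g,h}|^{2})^{\frac12}\big)$ fit the template of (\ref{Lhalf2}) --- in particular lining up the correct (column versus row) factored positive expression --- is precisely the content that hypothesis (ii) was designed to supply, and is what makes the noncommutative case take more effort than Theorem~\ref{end}. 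A lesser nuisance is the approximation needed to replace $\A$ by a norming family in the last display.
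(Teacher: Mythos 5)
Your proposal is correct and follows essentially the same route as the paper's own proof: the same dualization of $\sup_t\|T_t|\varphi-T_t\varphi|^2\|_\infty$ into atoms of the form $h(T_tg)^{\frac12}-T_t(h(T_tg)^{\frac12})$, the same $\int_0^t+\int_t^\infty$ splitting with insertion of $M_tT_t$, the bound $M_tT_tT_s\leq 3M_{3t}T_{t-s}$, Lemma \ref{lemma}, Kadison--Schwarz, the $2$-convexity of the $L_{\frac12}$ quasi-norm, and assumptions (i)--(ii) in exactly the roles the paper assigns them. Your Step 2 merely spells out the concluding chain of inequalities that the paper compresses into one sentence, so there is no substantive difference.
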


\begin{proof}  Note, for the BMO$(\T)$ norm,
\begin{eqnarray}
\|T_t|\varphi-T_t\varphi|^2\|^{\frac12}_\infty=\sup_f |\tau (\varphi^* f)|,
\end{eqnarray}
Here the supremum takes over for all $f=h(T_tg)^\frac12-T_t(h(T_tg)^\frac12)$ with $g\geq0, \|g\|_1,\|h\|_2\leq1$. This is easily verified as follows
\begin{eqnarray*}
\|T_t|\varphi-T_t\varphi|^2\|_\infty&=&\sup_{g\geq 0,\tau g\leq1} \tau [(T_t|\varphi-T_t\varphi|^2)g]\\
&=&\sup_g \tau [|\varphi-T_t\varphi|^2(T_tg)]\\
&=&\sup_g \tau |(\varphi-T_t\varphi)(T_tg)^\frac12|^2\\
& =&\sup_{g,\tau |h|^2\leq1}|\tau[ h(T_tg)^\frac12(\varphi-T_t\varphi)^*]|\\
& =&\sup_{g,h} |\tau \varphi^*[h(T_tg)^\frac12-T_t(h(T_tg)^\frac12) ]|
\end{eqnarray*}

We need to show such $f$'s are in $H_1^S(\T)$ with norm $\leq c$. Then the inclusion $(H^S_1)^*\subset BMO(\T)$  follows from a density  argument. And the equivalence $(H_1^S)^*=bmo(\T)=BMO(\T)$ and $H_1^G(\T)=H_1^S(\T)$ follow from inequality (\ref{HGS}) and Lemma \ref{JM2}.

 Fix such a $f$.
 Recall $M_t=\frac1t\int_0^tT_sds$.
\begin{eqnarray}
\tau(\int_0^t T_s\Gamma(T_sf)ds)^\frac12&\leq& \tau(\int_0^t M_tT_tT_s\Gamma(T_sf)ds)^\frac12\nonumber\\
&\leq& \tau(\int_0^t 3M_{3t}T_{t-s}\Gamma(T_sf)ds)^\frac12\nonumber \\
({\rm\ Lemma \ref{lemma}})&=& \tau(3M_{3t}T_{t}|f|^2-3M_{3t}|T_tf|^2)^\frac12 \nonumber\\
&=& \tau(3M_{3t}T_{t}|f|^2)^\frac12\label{30t}\\
((\ref{cp}) {\rm\ and\ triangle \ inequality})&\leq& \tau[(3M_{3t}T_{t}|h(T_tg)^\frac12|^2)^\frac12]+\tau[(3M_{3t}T_{2t}|h(T_tg)^\frac12|^2)^\frac12]\nonumber\\
({\rm \ apply (\ref{Lhalf2}))}&\leq&  c\tau[(M_{8t}|h(T_tg)^\frac12|^2)^\frac12]\leq c.\nonumber
\end{eqnarray}

The rest part of the proof remains the same as that for Theorem 0.2. Note that the noncommutative $L_{\frac12}$-quasi norm is $2$-convex, we still have $\tau [(A+B)^\frac12]\leq \tau [A^{\frac12}]+\tau [B^{\frac12}]$ for $A,B\geq0$.
\end{proof}

\medskip
\begin{example} Let $G$ be a discrete group. Let $\lambda_g, g\in G$ be the translation-operator on $\ell_2(G)$ defined as
$$\lambda_g(m)(h)=m(g^{-1}h).$$
$g\mapsto\lambda_g, g\in G$ is called the left regular representation of $G$. The so called group von Neumann algebras ${\mathcal M}_G$ of $G$ is the weak* closure of the linear span of the $\lambda_g$'s in $B(\ell_2(G))$.
The canonical trace $\tau$ on ${\mathcal M}_G$ is defined as
$\tau\lambda_e=1$ and $\tau (\lambda_g)=0$ if $g\neq e$.
If $G$ is abelian, then $L^p({\mathcal M}_G)$ is the canonical $L^p$ space of functions on the dual group $\hat G$ of $G$. In particular, if $G={\Bbb Z}$, the integer group, then $\lambda_k=e^{ikt}, k\in {\Bbb Z}$ and $L^p({\mathcal M}_G)=L^p({\Bbb T})$, the function space on the unit circle.

Let $\phi$ be a scalar valued function on $G$. We say $\phi$ is {\it conditionally negative} if
$$
 \sum_{g,h}\overline{a_g}a_h\phi(g^{-1}h)\leq0
 $$
for any finitely many coefficients $a_g\in {\Bbb C}$ with $\sum_g a_g=0$.
Schoenberg's
theorem claims that all standard semigroups of operators on the group von Neumann algebras ${\mathcal M}_G$ are in the form of
$T_t(\lambda_g)=e^{-\phi(t)}\lambda_g$ with $\phi$ a real valued conditionally negative function and $\phi(e)=0$, $\phi(g)=\phi(g^{-1})$.

Let $K_{\phi}(g,h)=\frac12(\phi(g)+\phi(h)-\phi(g^{-1}h))$, the Gromov form associated with $\phi$. Then $K_{\phi}$ is a positive definite function on $G\times G$. So is $K_{\phi}^2$. It is easy to compute by the definition that
\begin{eqnarray}
\Gamma(\sum_ga_g\lambda_g)&=&\sum_{g,h}\bar a_ga_h K_\phi(g,h)\lambda_{g^{-1}h},\\
\Gamma_2(\sum_ga_g\lambda_g)&=&\sum_{g,h}\bar a_ga_h K^2_\phi(g,h)\lambda_{g^{-1}h},
\end{eqnarray}
Therefore the $\Gamma_2 \geq0$ condition is  automatically hold for such $T_t$. So Theorem \ref{0.1non} applies to all such $(T_t)_t$'s.

Let ${\Bbb R}[G]$ be the algebra of all real valued bounded functions on $G$. Then
$$\langle \sum_ga_g\delta_g,\sum_hb_h\delta_h\rangle_{\phi}=\sum_{g,h}a_ga_hK_{\phi}(g,h)$$ defines a semi-inner product on ${\Bbb R}[G]$. After quotient the null space $$N_{\phi}=\{x\in {\Bbb R}[G], \langle x,x\rangle_{\phi}=0\},$$
 ${\Bbb R}[G]/N_{\phi}$ becomes a Hilbert space. In a forthcoming article,  we are going to show that $T_t$ satisfies the assumptions of Theorem \ref{0.2non} if ${\Bbb R}[G]/N_{\phi}$ is finite dimensional.
\end{example}

\section*{Appendix----A Carleson embedding theorem}
\setcounter{theorem}{0} \setcounter{equation}{0}\setcounter{section}{0}
Let$(M,\mu)$ be a sigma-finite measure space. Assume $(T_t)$ is a standard semigroup of operators on $L_p(M)$ with infinitesimal generator $L$.  Let $\Pt=(P_t)$
be the subordinated Poisson semigroup $P_t=e^{-t\sqrt{-L}}$. Given a $f\in L_p(M)$, then $F(t)=\Pt f=P_tf$ is $L$-harmonic on $M\times (0,\infty)$ in the sense that $(\partial^2_t +L)F=0$. Let $\nu=\nu_t d\mu$ be a measure on $M\times (0,\infty)$ with $\nu_t$ an integrable function on $M$.
Viewing $P_t$'s as analogues of the mean value operators, we say that $\nu$ is a {\it Carleson measure} with respect to $L$ if
\begin{eqnarray*}
\|\nu\|_{L,\alpha}=\sup_t\|P_{ t}\int_0^{\alpha t}\nu_sds\|_\infty<\infty.
\end{eqnarray*}
{\bf Theorem.}
Suppose $(T_t)$ is a standard semigroup of operators on a sigma-finite measure space $(M,\mu)$. Assume $T_t$ satisfies the $\Gamma_2\geq0$ condition (\ref{Gamma_2}). Let $1<p<\infty$. Then
$$\|\Pt f\|_{L_p( M\times (0,\infty),\nu))}\leq c_p\|f\|_{L^p(M)}$$ for all $f\in L^p(M)$
if $$\|\nu\|_{\Pt,4}\leq c.$$

\begin{proof}
 It is clear that $\|\Pt f\|_{L^\infty(\nu)}\leq \|f\|_{L^\infty}$. Let
 $$H_1(M)=\{f\in L_1(M); \sup_{\|g\|_{BMO(\Pt)}\leq1}|\tau fg|<\infty\}.$$
 By Lemma \ref{lemma}, we have
 $$(L_\infty(M),H_1(M))_{\frac1p}=L_p(M).$$
 It is then enough to show that
$\|\Pt f\|_{L^1(\nu)}\leq c\|f\|_{H_1(M)}.$
Note \begin{eqnarray*}
\int_{M\times (0,\infty)} |\Pt f|d\nu&\leq& \sup_{\|g\|_\infty\leq1}\int_M g(\int_0^\infty (P_tf)\nu_tdt) d\mu\\
&=& \sup_{\|g\|_\infty\leq1}\int_M f(\int_0^\infty P_t(g\nu_t)dt) d\mu\\
&\leq & \|f\|_{H_1(M)}\sup_{\|g\|_\infty\leq1}\|\int_0^\infty P_t(g\nu_t)dt\|_{BMO(\Pt)}.
\end{eqnarray*}
We go to show that $\|(\int_0^\infty P_t (g\nu_t)dt)\|_{BMO(\Pt)}\leq c\|\nu\|_{\Pt,4}$.
In fact, \begin{eqnarray*}
&&P_s|\int_0^\infty P_t (g\nu_t)dt-P_s(\int_0^\infty P_t(g\nu_t)dt)|\\
&\leq&P_s|\int_0^s (P_t-P_{t+s}) (g\nu_t)dt|+|P_s\int_s^\infty (P_t-P_{t+s}) (g\nu_t)dt)|\\
({\rm apply}\ (\ref{sbd}))&\leq&5P_s(\int_0^s  |g\nu_t|dt)+8s\int_s^\infty \frac1t P_{\frac t2} (|g\nu_t|)dt\\
(\|g\|_\infty\leq1)&\leq&5\|\nu\|_{\Pt,1}+8\sum_k\int_{2^ks}^{2^{k+1}s} \frac1{2^k} P_{\frac t2} (|\nu_t|)dt\\
&\leq&5\|\nu\|_{\Pt,1}+ \frac8{2^k}\sum_k\int_{2^ks}^{2^{k+1}s} P_{\frac t2} (|\nu_t|)dt\\
({\rm apply}\ (\ref{sbd}))&\leq&5\|\nu\|_{\Pt,1}+ \frac{16}{2^k}\sum_k\int_{2^ks}^{2^{k+1}s} P_{2^{k-1}s} (|\nu_t|)dt
\end{eqnarray*}
Taking the supremum over $s$ we get, by Lemma \ref{JN}, that
$$\|(\int_0^\infty P_t (g\nu_t)dt)\|_{BMO(\Pt)}\leq 37\|\nu\|_{\Pt,4}\leq c.$$
By interpolation, we get
$$\|\Pt f\|_{L_p(M\times(0,\infty),\nu)}\leq c_p\|f\|_{L_p(M,\mu)}.$$
\end{proof}

\textbf{Reference}

\medskip
 [A67] W.B. Arveson, Analyticity in operator algebras, Amer. J.
Math. 89(1967), 578--642.

[AMR08] P. Auscher, A. McIntosh and E. Russ, Hardy spaces of differential forms on Riemaniann
manifolds, J. Geom. Anal., 18 (2008), 192-248.


[AM] S. Avsec, T. Mei, $H_1$-BMO duality on group von Neumann algebras, preprint.

[BBG12] D. Bakry, F. Bolley, I. Gentil,
Dimension dependent hypercontractivity for Gaussian kernels, Probability Theory and Related Fields, DOI: 10.1007/s00440-011-0387, arxiv:1003.5072.

[BE85] D. Bakry and M. \'Emery, Diffusions hypercontractives. In S\'eminaire de probabilit\'es,
XIX, 1983/84, Lecture Notes in Math. 1123, pages 177–206. Springer, Berlin, 1985.

[CMS] Coifman, R. R.; Meyer, Y.; Stein, E. M. Some new function spaces and
their applications to harmonic analysis. J. Funct. Anal. 62 (1985), no. 2,
304--335.

[Da] E.B. Davies, Heat kernels and spectral theory, Cambridge Univ. Press,
1989.

[DY] X. Duong, L. Yan, Duality of Hardy and BMO spaces associated with
operators with heat kernel bounds. J. Amer. Math. Soc. 18 (2005), no. 4,
943--973.

[FS82] G. Folland and E. Stein, Hardy spaces on homogeneous groups, Princeton University Press,
Princeton, 1982.

[HM] Hofmann, Steve, Mayboroda, Svitlana,
Hardy and BMO spaces associated to divergence form elliptic operators.
Math. Ann. 344 (2009), no. 1, 37–116.

[HLMMY] S. Hofmann, G. Lu, D. Mitrea, M. Mitrea, and L. Yan, Hardy spaces associated to nonnegative
self-adjoint operators satisfying Davies-Gaffney estimates, Memoirs of the A.M.S., 214 (2011), no. 1007, vi+78 pp.

[J02] M. Junge, Doob's Inequality for Non-commutative Martingales, J. Reine
Angew. Math. 549 (2002), 149-190.

[J] M. Junge, Square function and Riesz-transform estimates for
subordinated semigroups, preprint.

[JLX06] M. Junge, C. Le Merdy, Q. Xu, $H^\infty $ Aunctional calculus and
square Aunctions on noncommutative $L^p$ -spaces. Ast\'{e}risque No. 305
(2006), vi+138 pp.

[JM10] M. Junge, T. Mei, Noncommutative Riesz Transforms-A Probabilistic
Approach,  {\it American Journal of Math.}, 2010, Vol. 132, No. 3, 611-681.

[JM12] M. Junge, T. Mei, ``BMO spaces Associated with Semigroups of
 Operators", {\it Math. Ann.}, 2012, Volume 352, Number 3, 691-743.

[JMP] M. Junge, T. Mei, J. Parcet, Smooth Fourier multipliers on group von Neumann algebras, preprint.

[JX07] M. Junge, Q. Xu, Noncommutative maximal ergodic theorems. J. Amer.
Math. Soc. 20 (2007), no. 2, 385--439.




[L08] Li, Xiang-Dong, Martingale transforms and Lp-norm estimates of Riesz transforms on complete Riemannian manifolds. Probab. Theory Related Fields 141 (2008), no. 1-2, 247–281. 

[L09] Li, Xiang-Dong On the strong Lp-Hodge decomposition over complete Riemannian manifolds. J. Funct. Anal. 257 (2009), no. 11, 3617–3646.

[M07] T. Mei, Operator Valued Hardy Spaces, Memoirs of AMS, 2007, V. 188, No.
881.

M08]  T. Mei, ``Tent Spaces Associated with Semigroups of Operators", {\it Journal of Functional Analysis}, 255 (2008) 3356-3406.

[MTX] T. Mart\'{i}nez, T. Jos\'{i} L., Q. Xu, Vector-valued
Littlewood-Paley-Stein theory for semigroups. Adv. Math. 203 (2006), no. 2,
430--475.

[Ne74] E. Nelson, Notes on non-commutative integration, J. Funct. Anal., 15
(1974), 103-116.

[P98] G. Pisier, Non-commutative vector valued $L_ p$-spaces and completely $%
p $-summing maps. Ast\'{e}risque No. 247 (1998), vi+131 pp.

[PX97] G. Pisier, Q. Xu, Non-commutative Martingale Inequalities,
Comm. Math. Phys. 189 (1997), 667-698.

[PX03] G. Pisier, Q. Xu, Non-commutative $L_p$-spaces. Handbook of the
geometry of Banach spaces, Vol. 2, 1459--1517, North-Holland, Amsterdam,
2003.

[SC10] Saloff-Coste, Laurent, The heat kernel and its estimates. Probabilistic approach to geometry, 405–436,
Adv. Stud. Pure Math., 57, Math. Soc. Japan, Tokyo, 2010.

[St93] E. M. Stein, Harmonic Analysis, Princeton Univ. Press, Princeton, New
Jersey, 1993.

[St70] E. M. Stein, Topic in Harmonic Analysis (related to Littlewood-Paley
theory), Princeton Univ. Press, Princeton, New Jersey, 1970.

[V80] N. Varopoulos, Aspects of probabilistic Littlewood-Paley theory. J.
Funct. Anal. 38 (1980), no. 1, 25--60.


\end{document}